\newtheorem{theorem}{Theorem}[section]
\newtheorem{lemma}{Lemma}[section]
\theoremstyle{definition}
\numberwithin{equation}{section}
\DeclareMathOperator{\VOL}{vol}
\DeclareMathOperator{\DIAG}{diag}
\DeclareMathOperator{\SPAN}{span}
\newcommand{\R}{\mathbb{R}}
\newcommand{\K}{\mathcal{K}}
\newcommand{\dd}{\mathop{}\!\mathrm{d}}
\newcommand{\abs}[1]{\left\vert#1\right\vert}
\newcommand{\set}[1]{\left\{#1\right\}}
\newcommand{\norm}[1]{\left\Vert#1\right\Vert}
\newcommand{\pd}{\partial}
\newcommand{\delbar}{\overline{\nabla}}
\newcommand{\uS}{\mathbb{S}^{n-1}}
\newcommand{\MA}{Monge-Amp\`ere }
\newcommand{\CTq}{\widetilde{C}_q}
\newcommand{\VTq}{\widetilde{V}_q}
\begin{document}

\title{Existence of solutions to the generalized dual Minkowski problem}

\author{Mingyang Li}
\address{School of Mathematical Sciences, South China Normal University, Guangzhou 510631, P.R. China}
\email{1519514482@qq.com}

\author{YanNan Liu}
\address{School of Mathematics and Statistics, Beijing Technology and Business University, Beijing 100048, P.R. China}
\email{liuyn@th.btbu.edu.cn}

\author{Jian Lu}
\address{School of Mathematical Sciences, South China Normal University, Guangzhou 510631, P.R. China}
\email{lj-tshu04@163.com}

\thanks{The author Liu was partially supported by
  Natural Science Foundation of China (12071017 and 12141103).} 

\thanks{The author Lu was partially supported by
  Natural Science Foundation of China (12122106).}

\date{}

\begin{abstract}
Given a real number $q$ and a star body in the $n$-dimensional Euclidean space,
the generalized dual curvature measure of a convex body was introduced by
Lutwak-Yang-Zhang \cite{LYZ.Adv.329-2018.85}. The corresponding generalized dual
Minkowski problem is studied in this paper. By using variational methods, we
solve the generalized dual Minkowski problem for $q<0$, and the even generalized
dual Minkowski problem for $0\leq q\leq1$. We also obtain a sufficient condition
for the existence of solutions to the even generalized dual Minkowski problem
for $1<q<n$.
\end{abstract}

\keywords{
  Dual Minkowski problem,
  Monge-Amp\`ere equation,
  subspace mass inequality,
  integral estimate.
}

\makeatletter
\@namedef{subjclassname@2020}{\textup{2020} Mathematics Subject Classification}
\makeatother

\subjclass[2020]{52A20, 35J96.}

\maketitle
\vskip4ex

\section{Introduction}

Given a real number $q\in\R$ and
a star body $Q$ in the $n$-dimensional Euclidean space $\R^n$,
for any convex body $K\subset\R^n$ containing the origin in its interior,
its \emph{generalized $q$-th dual curvature measure} $\widetilde{C}_q(K,Q,\cdot)$
is defined as
\begin{equation*}
  \widetilde{C}_q(K,Q,\eta)
  = \frac{1}{n} \int_{\boldsymbol{\alpha}_K^*(\eta)} \rho_K^q(u) \rho_Q^{n-q}(u) \dd u,
\end{equation*}
where $\eta$ is any Borel subset of the unit sphere $\uS$,
$\boldsymbol{\alpha}_K^*$ is the reverse radial Gauss image,
and $\rho_K,\rho_Q$ are radial functions of $K,Q$ respectively.
This definition was introduced by Lutwak-Yang-Zhang \cite{LYZ.Adv.329-2018.85}.
The corresponding \emph{generalized dual Minkowski problem}
is to find necessary and sufficient conditions on
a finite Borel measure $\mu$ on $\uS$, such that
\begin{equation}\label{gdMP}
  \mu=\widetilde{C}_q(K,Q,\cdot) 
\end{equation}
holds for some convex body $K\subset\R^n$.

In the special case when the given measure $\mu$ has a density $\frac{1}{n}f$
with respect to the standard measure on $\uS$,
the generalized dual Minkowski problem \eqref{gdMP}
is equivalent to solving the following \MA type equation: 
\begin{equation*} 
  h \Vert \delbar h \Vert_Q^{q-n} \det(\nabla^2h+hI) = f \quad \text{ on } \ \uS,
\end{equation*}
where $h$ is the support function of some convex body $K$,
$\nabla$ is the covariant derivative with respect to an orthonormal frame on $\uS$,
$\delbar h(x)=\nabla h(x) +h(x)x$ is the point on $\pd K$
whose unit outer normal vector is $x\in\uS$,
$\norm{\cdot}_Q$ is the Minkowski functional given by
\begin{equation*}
  \norm{y}_Q =\inf\set{\lambda>0 : y\in\lambda Q},
  \quad \forall y\in\R^n,
\end{equation*}
$I$ is the unit matrix of order $n-1$, and
$f$ is a given nonnegative integrable function.

When $Q$ is the unit ball, $\widetilde{C}_q(K,Q,\cdot)$
is reduced to the $q$-th dual curvature measure $\widetilde{C}_q(K,\cdot)$,
and Eq. \eqref{gdMP} reduced to the dual Minkowski problem.
As we know, the dual Minkowski problem 
was first proposed and studied by Huang-Lutwak-Yang-Zhang in
their groundbreaking paper \cite{HLYZ.Acta.216-2016.325}.
It contains two important special cases.
One is the logarithmic Minkowski problem when $q=n$;
see e.g. 
\cite{BHZ.IMRNI.2016.1807,
  BLYZ.JAMS.26-2013.831,
  CFL.Adv.411-2022.108782,
  CLZ.TAMS.371-2019.2623,
  KM.MAMS.277-2022.1,
  Sta.Adv.180-2003.290,
  Zhu.Adv.262-2014.909}.
The other is the Alexandrov problem when $q=0$,
which is the prescribed Alexandrov integral curvature problem
\cite{Ale.CRDASUN.35-1942.131, HLYZ.JDG.110-2018.1}.
In recent years,
the dual Minkowski problem has attracted great attention from many researchers;
see e.g.
\cite{BHP.JDG.109-2018.411,
  BLY+.Adv.356-2019.106805,
  CL.Adv.333-2018.87,
  EH.AiAM.151-2023.25,
  HP.Adv.323-2018.114,
  HJ.JFA.277-2019.2209,
  JW.JDE.263-2017.3230,
  LSW.JEMSJ.22-2020.893,
  LL,
  Zha.CVPDE.56-2017.18,
  Zha.JDG.110-2018.543}.

When $Q$ is a general star body, the uniqueness of solutions to Eq. \eqref{gdMP}
has recently been proved when $q<0$;
see \cite[Theorem 8.3]{LYZ.Adv.329-2018.85} for a discrete measure $\mu$,
and \cite[Theorem 1.2]{WZ.CMS-2024} for a general measure $\mu$.
Note that when $q=n$,
$\widetilde{C}_q(K,Q,\cdot)$ is independent of $Q$ by definition, 
and Eq. \eqref{gdMP} is then reduced to the logarithmic Minkowski problem,
whose even case was completely solved in \cite{BLYZ.JAMS.26-2013.831}.

In this paper we are concerned with
the existence of solutions to the generalized dual Minkowski problem
\eqref{gdMP}.
Recall that a measure on $\uS$ is called to be \emph{even},
if it has the same value on antipodal measurable subsets of $\uS$.

When $1<q<n$, 
a sufficient condition for the existence of origin-symmetric solutions to Eq. \eqref{gdMP}
is obtained.

\begin{theorem}\label{thm1} 
Assume $1<q<n$, and $Q$ is an origin-symmetric star body in $\R^n$.
If $\mu$ is a finite even Borel measure on $\uS$ satisfying
the following $q$-th subspace mass inequality:
\begin{equation*} 
  \frac{\mu(\uS\cap\xi_i)}{\mu(\uS)}<
  \min\set{\frac{i}{q},1}
\end{equation*}
for any proper $i$-dimensional subspace $\xi_i\subset\R^n$ with
$i=1,\cdots,n-1$,
then there exists an origin-symmetric convex body $K$ in $\R^n$
such that $\widetilde{C}_q(K,Q,\cdot)=\mu$.
\end{theorem}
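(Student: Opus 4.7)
The plan is to adopt a variational approach in the spirit of Huang-Lutwak-Yang-Zhang \cite{HLYZ.Acta.216-2016.325}, restricted to origin-symmetric convex bodies. Let $\mathcal{K}_e^n$ denote the class of origin-symmetric convex bodies in $\R^n$ with non-empty interior, and set
\begin{equation*}
\Phi_\mu(K) = \int_{\uS} \log h_K(x) \dd\mu(x), \qquad \VTq(K,Q) = \frac{1}{n}\int_{\uS} \rho_K^q(u) \rho_Q^{n-q}(u) \dd u.
\end{equation*}
The variational problem is to maximize $\Phi_\mu$ over the set $\set{K \in \mathcal{K}_e^n : \VTq(K,Q) = 1}$. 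Once a maximizer $K_0$ is produced, a first variation argument---extending the one in \cite{LYZ.Adv.329-2018.85}---identifies the differential of $\VTq(\cdot,Q)$ with $n\,\CTq(\cdot,Q,\cdot)$, and Lagrange multipliers yield a constant $\lambda>0$ with $\mu = \lambda\,\CTq(K_0,Q,\cdot)$; a subsequent rescaling of $K_0$ then produces the $K$ claimed in the theorem.

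To construct a maximizer, I would take a maximizing sequence $\{K_j\}\subset\mathcal{K}_e^n$. Because $Q$ is a fixed origin-symmetric star body, $\rho_Q$ is bounded above and below by positive constants on $\uS$, so the normalization $\VTq(K_j,Q)=1$ yields a uniform upper bound on the diameter of $K_j$, and Blaschke selection supplies a Hausdorff-convergent subsequence with limit $K_0$, an origin-symmetric compact convex set. The decisive question is whether $K_0$ has non-empty interior: if so, $h_{K_j}\to h_{K_0}$ uniformly on $\uS$ with a positive lower bound, hence $\Phi_\mu(K_j)\to\Phi_\mu(K_0)$ and $K_0$ is a genuine maximizer to which the Euler-Lagrange analysis applies.

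The main obstacle, as in all dual Minkowski-type problems, is precisely this non-degeneracy, and it is here that the $q$-th subspace mass inequality is engineered to apply. Arguing by contradiction, suppose $K_j$ collapses toward a proper $i$-dimensional subspace $\xi_i$. Because $\rho_Q^{n-q}$ appears in $\VTq$ only as a uniformly bounded weight, the constraint $\VTq(K_j,Q)=1$ forces $h_{K_j}$ to shrink along $\xi_i^\perp$ while capping its growth along $\xi_i$, and a careful splitting of $\Phi_\mu(K_j)$ according to proximity to $\xi_i$ produces an upper bound whose leading logarithmic term has coefficient proportional to
\begin{equation*}
\frac{\mu(\uS\cap\xi_i)}{\mu(\uS)} - \min\set{\frac{i}{q},\,1},
\end{equation*}
the $\min$ reflecting the dichotomy $i<q$ versus $i\geq q$. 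The hypothesis makes this coefficient strictly negative, so $\Phi_\mu(K_j)\to-\infty$, contradicting the finiteness of the supremum (which is verified separately using, e.g., a ball as a competitor). Transposing the subspace concentration estimate from the case $Q$ equal to the unit ball treated in \cite{BHP.JDG.109-2018.411} to a general origin-symmetric star body $Q$ is essentially bookkeeping for the bounded factor $\rho_Q^{n-q}$, and this step is where the bulk of the technical work is expected to concentrate.
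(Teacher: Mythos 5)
Your overall strategy---maximize $\Phi_\mu(K)=\int_{\uS}\log h_K\,\dd\mu$ subject to $\VTq(K,Q)=1$, argue compactness from the subspace mass inequality, then apply a first variation and rescale---is the same as the paper's, up to the cosmetic difference that the paper minimizes the degree-zero homogeneous functional $J[g]=\frac{1}{|\mu|}\int_{\uS}\log g\,\dd\mu-\frac{1}{q}\log\VTq(K_g,Q)$ over $C_e^+(\uS)$ rather than imposing a constraint. The first-variation step (paper's Lemma \ref{lemVari} and \ref{lem317}) also matches what you describe. Your observation that $\rho_Q$ is bounded above and below so the $Q$-dependence only shifts multiplicative constants is correct and is exactly how Lemma \ref{lem314} absorbs $Q$.

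The gap is in the compactness argument, which you correctly identify as the heart of the matter but then dispatch with two imprecisions. First, you cite \cite{BHP.JDG.109-2018.411} for the subspace concentration estimate, but that paper proves the \emph{necessity} of the inequality for $Q=B^n$; the \emph{existence} arguments you would be transposing are those of \cite{Zha.JDG.110-2018.543} and \cite{BLY+.Adv.356-2019.106805}. Second, and more substantively, you assert that a ``careful splitting of $\Phi_\mu(K_j)$'' forces $\Phi_\mu(K_j)\to-\infty$ under the constraint $\VTq=1$, but you never say how the constraint is to be enforced quantitatively on a degenerating sequence. The paper (and the prior works) do this by passing to the maximum-volume ellipsoid $E_k$ of $K_{h_k}$, writing the entropy estimate purely in terms of the semi-axes $b_{1k}\le\cdots\le b_{nk}$ (Lemma \ref{lemEnt}), and pairing it with a two-sided sharp estimate of $\VTq(E_k,Q)$ in terms of $b_{1k},\dots,b_{nk}$ (Lemma \ref{lem314}, which hides a delicate $\lceil q\rceil$-dichotomy and a logarithmic correction when $q$ is an integer). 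This last estimate is nontrivial; earlier papers obtained one-sided versions via ad hoc barrier bodies (cross-polytopes in \cite{HLYZ.Acta.216-2016.325}, Cartesian products in \cite{Zha.JDG.110-2018.543, BLY+.Adv.356-2019.106805}), while the present paper derives a bidirectional version directly from the integral estimate of Lemma \ref{lemInt} following Jian--Lu--Wang. Your sketch neither supplies such an estimate nor indicates which prior construction you would adapt, so the decisive step in the proof is left blank.
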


When $0\leq q\leq1$, Eq. \eqref{gdMP} for the even case is completely solved.

\begin{theorem}\label{thm2} 
Assume $0<q\leq1$, $Q$ is an origin-symmetric star body in $\R^n$,
and $\mu$ is a finite even Borel measure on $\uS$.
Then there exists an origin-symmetric convex body $K$ in $\R^n$
such that $\widetilde{C}_q(K,Q,\cdot)=\mu$
if and only if $\mu$
is not concentrated on any great sub-sphere of $\uS$.
\end{theorem}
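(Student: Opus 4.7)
I would prove the two directions separately. Necessity is short: if $\mu=\CTq(K,Q,\cdot)$ and $\mu$ is concentrated on $\xi\cap\uS$ for some hyperplane $\xi$ through the origin, then $\CTq(K,Q,\uS\setminus\xi)=0$, and since $\rho_K,\rho_Q>0$ on $\uS$, the defining formula of $\CTq$ forces the radial Gauss map of $K$ to take values in $\xi$ almost everywhere---impossible for a convex body with the origin in its interior.

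For sufficiency I would adopt the standard variational framework. First approximate $\mu$ weakly by a sequence of even measures $\mu_k$ with positive continuous densities, of total mass $|\mu|$, each still not concentrated on any great sub-sphere. For each $k$, solve the extremal problem
\begin{equation*}
  \sup\set{ \int_{\uS}\log h_K(x)\dd\mu_k(x) : K \text{ origin-symmetric},\ \VTq(K,Q)=|\mu|/n }.
\end{equation*}
The logarithmic variation formula
\begin{equation*}
  \left.\frac{\dd}{\dd t}\right|_{t=0}\VTq(K_t,Q) = q\int_{\uS}g(x)\dd\CTq(K,Q,x),\qquad \log h_{K_t}=\log h_K+tg,
\end{equation*}
combined with a Lagrange multiplier argument, shows that any maximizer $K_k$ satisfies $\CTq(K_k,Q,\cdot)=c_k\mu_k$ for some $c_k>0$, and homogeneity lets me rescale $K_k$ so that $c_k=1$.

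The crux, and the main obstacle, is the compactness used to produce such maximizers. The key geometric input is $h_K(x)\geq\rho_K(u)\abs{x\cdot u}$ for all $x,u\in\uS$ whenever $K$ is origin-symmetric. Along a maximizing sequence $K_j$, if the diameter blows up in a direction $v_j\to v$, then the constraint $\VTq(K_j,Q)=|\mu|/n$ combined with $0<q\leq 1$ forces $\rho_{K_j}$ to concentrate near $\pm v$ while other radial sections shrink, driving $h_{K_j}$ uniformly to zero on an open neighborhood of the great sub-sphere $v^\perp\cap\uS$. Since $\mu_k$ has strictly positive density, its mass on this neighborhood is bounded below, so $\int\log h_{K_j}\dd\mu_k\to-\infty$, contradicting the maximizing property; a symmetric argument rules out the inradius of $K_j$ shrinking to zero. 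The range $0<q\leq 1$ is essential in this step, in contrast with the regime $1<q<n$ of Theorem~\ref{thm1}, where one is forced to invoke the sharper subspace mass inequality.

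Finally, passing $k\to\infty$, the non-concentration of $\mu$ (transferred quantitatively to each $\mu_k$ via the mollification) yields uniform diameter and inradius bounds on $\{K_k\}$, and hence Hausdorff convergence along a subsequence to an origin-symmetric convex body $K$. Weak continuity of $\CTq(\cdot,Q,\cdot)$ in the first slot together with $\mu_k\rightharpoonup\mu$ then gives $\CTq(K,Q,\cdot)=\mu$.
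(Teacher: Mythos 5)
Your necessity argument is fine, but your sufficiency plan has a genuine gap at its core: you pose the extremal problem in the wrong direction, and the compactness mechanism you describe does not operate as claimed.

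You propose to maximize $\int_{\uS}\log h_K\,\dd\mu_k$ subject to $\VTq(K,Q)$ fixed. For $0<q\leq1$ this supremum is $+\infty$, so no maximizer exists. Indeed, by the sharp estimate for dual quermassintegrals of origin-centered ellipsoids (Lemma~\ref{lem314} in the paper), for non-integer $q\in(0,1)$ one has $\VTq(E,Q)\approx b_1^q$ where $b_1\leq\cdots\leq b_n$ are the semi-axes of $E$. Thus the constraint only pins down the \emph{shortest} axis; one may let $b_2,\dots,b_n\to\infty$ freely. Along such a family, $h_E(x)=\bigl(\sum_i b_i^2x_i^2\bigr)^{1/2}\to\infty$ for every $x$ off the single pair of poles $\pm e_1$, so $\int\log h_E\,\dd\mu_k\to+\infty$ for any $\mu_k$ with positive density (and a fortiori for any $\mu$ not concentrated at $\pm e_1$). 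This also shows that the claim in your compactness step --- that when the diameter blows up the constraint forces ``other radial sections to shrink'' and $h_{K_j}\to0$ near the great subsphere $v^\perp\cap\uS$ --- is false for $0<q\leq 1$: the inradius $b_1$ is pinned, so $\rho_{K_j}\geq b_{1j}$ and $h_{K_j}\geq b_{1j}$ remain bounded below, and nothing goes to zero.

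The paper's route is the opposite one and for exactly this reason: it \emph{minimizes} the scale-invariant functional $J[g]=\frac{1}{|\mu|}\int_{\uS}\log g\,\dd\mu-\frac{1}{q}\log\VTq(K_g,Q)$ over $g\in C_e^+(\uS)$. Stretching $K$ in one direction while holding $\VTq$ fixed sends $\int\log h_K\,\dd\mu\to+\infty$, so the functional blows up (not down) and degeneration is excluded. The precise lower bound is obtained by combining the spherical-partition estimate (Lemma~\ref{lemEnt}, which needs only the weak mass condition $\mu(\uS\cap\xi_{i})/|\mu|<1$ when $0<q\leq1$) with the bidirectional ellipsoid estimate of Lemma~\ref{lem314}, using the John ellipsoid $E_k\subset K_{h_k}\subset\sqrt n E_k$. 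No mollification of $\mu$ is needed; the minimization is carried out directly for the given measure, and the Euler--Lagrange equation, derived via the Alexandrov-body variational formula of Lemma~\ref{lemVari} (not the naive perturbation $h+t\varphi$ you write, since $h+t\varphi$ need not be a support function), produces a multiple of $\mu$. If you switch your problem to a minimization and replace the (incorrect) ``radial sections shrink'' mechanism with the estimate that stretching drives the functional to $+\infty$, the rest of your outline --- Lagrange multiplier, rescaling, Blaschke selection --- can be made to work, but as written the central step fails.
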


\begin{theorem}\label{thm3} 
Assume $Q$ is an origin-symmetric star body in $\R^n$,
and $\mu$ is a finite even Borel measure on $\uS$.
Then there exists an origin-symmetric convex body $K$ in $\R^n$
such that $\widetilde{C}_0(K,Q,\cdot)=\mu$
if and only if $\mu$
is not concentrated on any great sub-sphere of $\uS$
and $\mu(\uS)$ is equal to the volume of $Q$.
\end{theorem}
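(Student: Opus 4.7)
The plan is to prove Theorem~1.3 by the variational method, taking the natural $q\to 0$ limit of the functional employed for Theorem~1.2. For necessity, setting $\eta=\uS$ in the definition of $\widetilde{C}_0$ gives $\mu(\uS) = \frac{1}{n}\int_{\uS}\rho_Q^n\dd u = \VOL(Q)$. If $\mu$ were concentrated on some great sub-sphere $\uS\cap H$, then $\widetilde{C}_0(K,Q,\uS\setminus(\uS\cap H))=0$; since $\rho_Q>0$, this forces $\boldsymbol{\alpha}_K^*(\uS\setminus(\uS\cap H))$ to have zero spherical measure, i.e., the outer normal to $K$ at $\rho_K(u)u$ lies in $H$ for a.e.\ $u\in\uS$, which is incompatible with $K$ being a full-dimensional convex body.

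For sufficiency, define the functional
\[
\Phi(K) = \int_{\uS}\log h_K\dd\mu - \frac{1}{n}\int_{\uS}\log\rho_K(u)\,\rho_Q^n(u)\dd u
\]
on the class of origin-symmetric convex bodies in $\R^n$. The hypothesis $\mu(\uS) = \VOL(Q)$ makes $\Phi$ invariant under dilations $K\mapsto\lambda K$. Under the logarithmic family $h_{K_t} = h_K e^{t\phi}$, which satisfies $\log\rho_{K_t}(u) = \log\rho_K(u) + t\phi(\alpha_K(u))+o(t)$ where $\alpha_K$ denotes the radial Gauss map, the first variation is
\[
\delta\Phi = \int_{\uS}\phi\dd\mu - \int_{\uS}\phi\dd\widetilde{C}_0(K,Q,\cdot),
\]
so the Euler--Lagrange equation at an interior minimizer is exactly $\mu=\widetilde{C}_0(K,Q,\cdot)$. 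Hence it suffices to prove that $\Phi$ attains its infimum.

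By scale invariance, restrict a minimizing sequence to $\{\max h_K = 1\}$, so that each $K_j\subset B_1$. Blaschke's selection theorem yields a Hausdorff-convergent subsequence with limit $K_0\subset B_1$. The main step is to exclude $K_0$ being contained in a proper hyperplane. If $K_0\subset H$, then $\epsilon_j:=\min h_{K_j}=h_{K_j}(x_j)\to 0$ with $x_j\to x_0\perp H$, and the inclusion $K_j\subset B_1\cap\{|\langle y,x_j\rangle|\leq\epsilon_j\}$ gives $\rho_{K_j}(u)\leq\min(\epsilon_j/|\langle u,x_j\rangle|,1)$, producing
\[
\frac{1}{n}\int_{\uS}\log\rho_{K_j}\,\rho_Q^n\dd u \leq \VOL(Q)\log\epsilon_j + C_1.
\]
A complementary lower bound for $\int_{\uS}\log h_{K_j}\dd\mu$ is obtained by splitting $\uS$ into a neighborhood $U_j$ of $\{\pm x_j\}$ (where only the trivial estimate $h_{K_j}\geq\epsilon_j$ is available) and its complement (where the attainment of $\max h_{K_j}=1$ at a direction almost perpendicular to $x_j$, combined with convexity, yields a lower bound independent of $j$). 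Using $\mu(\uS)=\VOL(Q)$, the two bounds combine to
\[
\Phi(K_j)\geq \mu(\uS\setminus U_j)\,|\log\epsilon_j| - C_2,
\]
and the non-concentration hypothesis gives $\mu(\uS\setminus\{\pm x_0\})>0$, so $\mu(\uS\setminus U_j)$ is bounded below by a positive constant for large $j$. Hence $\Phi(K_j)\to+\infty$, contradicting minimality; therefore $K_0$ is full-dimensional and the Euler--Lagrange equation yields $\widetilde{C}_0(K_0,Q,\cdot)=\mu$.

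The main obstacle is the coercivity estimate just sketched. Both integrals in $\Phi$ diverge to $-\infty$ along degenerating sequences, and matching their leading-order rates to produce $\Phi(K_j)\to+\infty$ requires a lower bound for $\int\log h_{K_j}\dd\mu$ going beyond the trivial $h_{K_j}\geq\epsilon_j$. This is precisely where the non-concentration of $\mu$ on \emph{every} great sub-sphere, rather than on a single specified one, enters crucially.
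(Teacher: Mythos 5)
Your overall strategy coincides with the paper's: a scale-invariant variational functional of entropy type, the upper bound $\frac{1}{n}\int_{\uS}\log\rho_{K}\,\rho_Q^n\,\dd u\leq\VOL(Q)\log(\min\rho_K)+C$ on the dual-entropy part (this is exactly the paper's Lemma~\ref{lem320}), a complementary lower bound for $\int_{\uS}\log h_K\,\dd\mu$, Blaschke selection, and the Euler--Lagrange step via the variational formula of \cite{LYZ.Adv.329-2018.85}. The necessity argument and the first-variation computation are fine.

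The coercivity estimate, however, has a genuine gap. You split $\uS$ into a small neighborhood $U_j$ of $\{\pm x_j\}$ (the direction where $h_{K_j}$ attains its minimum $\epsilon_j$) and its complement, and you assert that $h_{K_j}$ is bounded below \emph{uniformly in $j$} on $\uS\setminus U_j$, on the grounds that $\max h_{K_j}=1$ is attained in a direction $y_j$ nearly perpendicular to $x_j$. This is false when $n\ge 3$. Convexity together with $\pm y_j\in K_j$ gives only $h_{K_j}(u)\geq|u\cdot y_j|$, which degenerates as $u$ approaches the great sub-sphere $y_j^{\perp}$, and $y_j^{\perp}$ contains directions far from $\pm x_j$. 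Concretely, take $K_j$ an origin-symmetric box with half-widths $1,\,\epsilon_j^{1/2},\,\epsilon_j$ along the coordinate axes: then $\min h_{K_j}=\epsilon_j$ at $\pm e_3$ and $\max h_{K_j}$ near $\pm e_1$, yet $h_{K_j}(e_2)=\epsilon_j^{1/2}\to 0$ while $e_2$ is nowhere near $\pm e_3$. Hence your claimed inequality $\Phi(K_j)\geq\mu(\uS\setminus U_j)|\log\epsilon_j|-C_2$ does not follow, and the argument stops.

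The correct split---the one the paper performs in Lemma~\ref{lemEnt1}---is around the \emph{maximizing} direction: take a small fixed neighborhood $\Omega_{\delta_0}=\{u\in\uS:|u\cdot\tilde y|>\delta_0\}$ of the limit $\tilde y$ of $y_j$, where for large $j$ one has $h_{K_j}\geq\frac{\delta_0}{2}\max h_{K_j}$, and use only the trivial bound $h_{K_j}\geq\epsilon_j$ on the complement $\uS\setminus\Omega_{\delta_0}$. Combined with the entropy upper bound this yields $\Phi(K_j)\geq\mu(\Omega_{\delta_0})\,|\log\epsilon_j|-C$, which diverges because non-concentration of $\mu$ on the great sub-sphere $\tilde y^{\perp}$ forces $\mu(\Omega_{\delta_0})>0$ for $\delta_0$ sufficiently small (note also this is non-concentration at $\tilde y^{\perp}$, not at $\{\pm x_0\}^{\perp}$ as your write-up suggests). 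With this single correction your proof becomes essentially identical to the paper's.
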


When $q<0$, Eq. \eqref{gdMP} can be solved for the general case.

\begin{theorem}\label{thm4} 
Assume $q<0$, $Q$ is a star body in $\R^n$,
and $\mu$ is a finite Borel measure on $\uS$.
Then there exists a convex body $K$ in $\R^n$
containing the origin in its interior,
such that $\widetilde{C}_q(K,Q,\cdot)=\mu$
if and only if $\mu$
is not concentrated in any closed hemisphere of $\uS$.
\end{theorem}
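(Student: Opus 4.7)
\textbf{The plan} is to prove Theorem~\ref{thm4} by a variational argument, in the spirit of the treatment of the classical dual Minkowski problem ($Q$ the unit ball) by Huang-Lutwak-Yang-Zhang \cite{HLYZ.Acta.216-2016.325} and Zhao \cite{Zha.JDG.110-2018.543}, adapted to a general star body $Q$.

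\textbf{Necessity.} Suppose $\CTq(K,Q,\cdot)=\mu$ for some convex body $K$ with origin in its interior. For any $v\in\uS$, the open hemisphere $\set{x\in\uS:\langle x,v\rangle>0}$ is the image under the radial Gauss map of a set of positive spherical measure, because every unit vector occurs as an outward normal at some boundary point of a body with the origin in its interior. Since the density $\frac{1}{n}\rho_K^q\rho_Q^{n-q}$ is strictly positive on $\uS$, $\mu$ charges this hemisphere, so $\mu$ cannot be concentrated on any closed hemisphere.

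\textbf{Sufficiency.} I introduce the log-entropy functional
$$\Phi(K):=\int_{\uS}\log h_K\,\dd\mu,\qquad K\in\K_0,$$
where $\K_0$ denotes the class of convex bodies in $\R^n$ containing the origin in their interior, and study the constrained minimization
$$\mathfrak{m}:=\inf\set{\Phi(K):K\in\K_0,\ \VTq(K,Q)=1}.$$
The $q$-homogeneity $\VTq(sK,Q)=s^q\VTq(K,Q)$ with $q<0$ makes the constraint set nonempty (every $K\in\K_0$ has an admissible dilate) and will enable the final rescaling. For a minimizing sequence $\set{K_j}$, the constraint $\VTq(K_j,Q)=1$ combined with $q<0$ excludes both unbounded diameter and degeneration to a lower-dimensional set---in either case the integral $\int\rho_{K_j}^q\rho_Q^{n-q}\,\dd u$ would be forced away from $n$. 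Blaschke's selection theorem then supplies a subsequential Hausdorff limit $K^*$ which is a nondegenerate compact convex body.

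\textbf{The main obstacle} is to prove $K^*\in\K_0$. If instead $0\in\pd K^*$, then there is $\xi_0\in\uS$ with $h_{K^*}(\xi_0)=0$, and $h_{K_j}\to 0$ uniformly on a neighborhood of $\xi_0$ by Hausdorff convergence. The hypothesis that $\mu$ is not concentrated on the closed hemisphere $\set{x:\langle x,\xi_0\rangle\leq 0}$ should then force $\mu$ to place effective mass near $\xi_0$, driving $\Phi(K_j)\to-\infty$ and contradicting $\Phi(K_j)\to\mathfrak{m}\in\R$. Executing this rigorously is the technical heart of the proof; I expect to proceed by a first reduction to discrete measures (each inheriting the hemisphere hypothesis), in which the degeneration is geometrically transparent, and then recover the general case by weak-$\ast$ approximation.

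\textbf{Euler-Lagrange and conclusion.} Once $K^*\in\K_0$ is established, the variational formulas of \cite{LYZ.Adv.329-2018.85},
$$\frac{\dd}{\dd t}\bigg|_{t=0}\Phi(K_t)=\int_{\uS}\frac{\varphi}{h_{K^*}}\,\dd\mu,\qquad\frac{\dd}{\dd t}\bigg|_{t=0}\VTq(K_t,Q)=q\int_{\uS}\frac{\varphi}{h_{K^*}}\,\dd\CTq(K^*,Q,\cdot),$$
for $K_t$ the Wulff shape of $h_{K^*}+t\varphi$ with $\varphi\in C(\uS)$, give the Lagrange condition $\dd\mu=\lambda q\,\dd\CTq(K^*,Q,\cdot)$ for some $\lambda\in\R$. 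Integration over $\uS$ forces $\lambda q>0$, and the homothety $K:=(\lambda q)^{1/q}K^*$, well-defined since $q<0$ and $\lambda q>0$, satisfies $\CTq(K,Q,\cdot)=\mu$ via the scaling $\CTq(sK,Q,\cdot)=s^q\CTq(K,Q,\cdot)$.
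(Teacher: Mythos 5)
Your overall framework — minimize an entropy-type functional against $\mu$ while normalizing by $\widetilde V_q$, then extract an Euler--Lagrange equation and rescale — is the same one the paper uses, and your Euler--Lagrange/rescaling calculation is essentially correct. The necessity argument is also fine. But there is a genuine gap in the compactness step.

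You claim that the constraint $\widetilde V_q(K_j,Q)=1$ together with $q<0$ "excludes both unbounded diameter and degeneration to a lower-dimensional set," and invoke Blaschke's selection theorem. This is wrong for the diameter. The paper's Lemma \ref{lem318} shows that for $q<0$ one has $\widetilde V_q(K,Q)\approx(\min_{\uS}\rho_K)^q$ with constants depending only on $n$, $q$ and $Q$; the quantity $\widetilde V_q(K,Q)$ is insensitive to $\max\rho_K$. Concretely, for $K_j=[-1,1]^{n-1}\times[-j,j]$ one has $\min\rho_{K_j}=1$ and $\widetilde V_q(K_j,Q)\approx 1$ uniformly in $j$, yet $\operatorname{diam}K_j\to\infty$. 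So the constraint alone does not bound the minimizing sequence, Blaschke's theorem cannot yet be applied, and $K^*$ need not exist. Conversely, what the constraint \emph{does} give you — a uniform positive lower bound for $\min\rho_{K_j}$ — is exactly what prevents $0\in\pd K^*$, so the "main obstacle" you single out is actually the easy part and requires none of the discrete-measure machinery you sketch.

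The missing ingredient is an entropy-type lower bound (the paper's Lemma \ref{lemEnt1}): because $\mu$ is not concentrated on any closed hemisphere, one finds $\epsilon_0,\delta_0>0$ and, after passing to a subsequence, the estimate
\begin{equation*}
\frac{1}{|\mu|}\int_{\uS}\log h_{K_j}\,\dd\mu \;\geq\; \log\!\left(\tfrac{\delta_0}{2}\,(\max h_{K_j})^{\epsilon_0}(\min h_{K_j})^{1-\epsilon_0}\right).
\end{equation*}
Combined with $\min h_{K_j}=\min\rho_{K_j}\approx 1$ from the constraint, this forces $\int\log h_{K_j}\,\dd\mu\to+\infty$ if $\max h_{K_j}\to\infty$, contradicting that $\Phi(K_j)\to\mathfrak m<\infty$. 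That is where the hemisphere hypothesis enters the compactness argument; you currently put it in the wrong place (preventing $0\in\pd K^*$) while the place it is genuinely needed (bounding the diameter) is left with an incorrect justification.
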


We note that when $Q$ is the unit ball $B^n$, the above four theorems have been
obtained in previous literature.
Specifically, in the case $Q=B^n$,
Theorem \ref{thm1} was established in
\cite{Zha.JDG.110-2018.543, BLY+.Adv.356-2019.106805},
Theorem \ref{thm2} was proved in
\cite{HLYZ.Acta.216-2016.325},
Theorem \ref{thm3} was proved in
\cite{Ale.CRDASUN.35-1942.131, HLYZ.JDG.110-2018.1},
and Theorem \ref{thm4} was obtained in
\cite{Zha.CVPDE.56-2017.18}.
We also note that when $Q=B^n$, the $q$-th subspace mass inequality
in Theorem \ref{thm1} is necessary \cite{BHP.JDG.109-2018.411}.
When $Q$ is a general star body, these above theorems are new, as far as we know.

Our methods of proving Theorems \ref{thm1}---\ref{thm4} are 
the variational methods developed in several papers
\cite{BLYZ.JAMS.26-2013.831,
  HLYZ.Acta.216-2016.325,
  Zha.CVPDE.56-2017.18,
  HLYZ.JDG.110-2018.1,
  Zha.JDG.110-2018.543,
  BLY+.Adv.356-2019.106805}.
When proving Theorems \ref{thm1} and \ref{thm2},
a sharp estimate about dual quermassintegrals of any origin-symmetric convex body
is crucial.
By utilizing the maximum-volume ellipsoid of an origin-symmetric convex body,
it is equivalent to finding a sharp estimate about dual quermassintegrals of any
origin-centered ellipsoid.
In these mentioned papers, several different types of barrier bodies were constructed
to estimate dual quermassintegrals of ellipsoids, such as
a cross-polytope in \cite{HLYZ.Acta.216-2016.325},
the Cartesian product of an ellipsoid and a ball in \cite{Zha.JDG.110-2018.543}, and
the Cartesian product of an ellipsoid, a line segment, and a ball 
in \cite{BLY+.Adv.356-2019.106805}.
While in our paper,
by directly estimating the integral expression of dual quermassintegrals of
origin-centered ellipsoids,
we can obtain a bidirectional sharp estimate;
see Lemmas \ref{lemInt} and \ref{lem314}.
In fact, the key technique to prove Lemma \ref{lemInt} comes from
\cite[Lemma 4.1]{JLW.JFA.274-2018.826}
written by Jian, Wang, and the third author.

At the end of this introduction, we remark that
there are various other extensions of the dual Minkowski problem, such as
$L_p$ dual Minkowski problem
\cite{BF.JDE.266-2019.7980,
  CHZ.MA.373-2019.953,
  CCL.AP.14-2021.689,
  HZ.Adv.332-2018.57,
  JWW.CVPDE.60-2021.16,
  JWW.MA.386-2023.1201,
  LLL.IMRNI.-2022.9114,
  WZ.CMS-2024}, 
dual Orlicz-Minkowski problem
\cite{CLLX.JGA.32-2022.40,
  GHW+.CVPDE.58-2019.12,
  GHXY.CVPDE.59-2020.15,
  LL.TAMS.373-2020.5833,
  ZXY.JGA.28-2018.3829},
and Gaussian Minkowski problem
\cite{FLX.JGA.33-2023.39,
  FHX.JDE.363-2023.350,
  HXZ.Adv.385-2021.36,
  Liu.CVPDE.61-2022.23}.
See also
\cite{Che.ANS.23-2023.20220068,
  HLYZ.Adv.224-2010.2485,
  JL.Adv.344-2019.262,
  Jia.ANS.10-2010.297,
  JLL.ANS.21-2021.155,
  LS.ANS.23-2023.20220040,
  LW.JDE.254-2013.983,
  Lut.JDG.38-1993.131,
  XYZZ.ANS.23-2023.20220041}
for other Minkowski type problems.

This paper is organized as follows.
In Section \ref{sec2},
we give some basic knowledge about convex bodies and dual curvature measures.
In Section \ref{sec3},
a key integral estimate is proved which will be used to obtain a sharp
estimate about dual quermassintegrals of any origin-symmetric convex body.
In Section \ref{sec4},
we prove Theorems \ref{thm1} and \ref{thm2} by a variational method.
Theorems \ref{thm3} and \ref{thm4} will be proved
in Sections \ref{sec6} and \ref{sec5} respectively.

\section{Preliminaries}
\label{sec2}

In this section we introduce some notations and preliminary results
about convex bodies and dual curvature measures.
The reader is referred to the book \cite{Schneider.2014}
and the article \cite{LYZ.Adv.329-2018.85}
for a comprehensive introduction on the background.

Let $\R^n$ be the $n$-dimensional Euclidean space, and $\uS$ the unit sphere.
A non-empty set $Q\subset\R^n$ is called \emph{star-shaped} with respect to the origin
if the line segment joining any point of $Q$ to the origin is completely
contained in $Q$.
For a compact star-shaped set $Q$, the \emph{radial function} $\rho_Q$ is defined as
\begin{equation*}
  \rho_Q(u)=\max\set{\lambda : \lambda u\in Q},
  \quad u\in\uS.
\end{equation*}
A \emph{star body} in $\R^n$ is a compact star-shaped subset
with respect to the origin,
which has a positive continuous radial function.
The set of all star bodies in $\R^n$ is denoted by $\mathcal{S}_o^n$.

A \emph{convex body} in $\R^n$ is a compact convex subset with non-empty interior.
Let $\K_o^n$ denote the class of convex bodies
containing the origin in their interiors, and
$\K_e^n$ the class of origin-symmetric convex bodies.
For a convex body $K$, its \emph{support function} $h_K$ is given by
\begin{equation*}
  h_K(x) =\max \left\{ \xi\cdot x : \,\xi\in K\right\},
  \quad x\in \uS.
\end{equation*}
Here ``$\cdot$'' denotes the inner product in the Euclidean space $\R^n$.
Note that $\K_o^n\subset\mathcal{S}_o^n$.
For any $K\in\K_o^n$, there is
\begin{equation} \label{eq:102}
  \frac{1}{\rho_K(u)} 
  = \max_{x\in\uS} \frac{u\cdot x}{h_K(x)},
  \quad u\in\uS.
\end{equation}

It is well known that a convex body is uniquely determined by its support
function, and the convergence of a sequence of convex bodies is equivalent to
the uniform convergence of the corresponding support functions on $\uS$.
The Blaschke selection theorem says that every bounded sequence of convex bodies
has a subsequence that converges to a compact convex subset.

Given $q\in\R$ and $Q\in\mathcal{S}_o^n$,
for any $K\in\K_o^n$, its \emph{generalized $q$-th dual curvature measure}
is defined as
\begin{equation}\label{gdCm}
  \widetilde{C}_q(K,Q,\eta)
  = \frac{1}{n} \int_{\boldsymbol{\alpha}_K^*(\eta)} \rho_K^q(u) \rho_Q^{n-q}(u) \dd u,
\end{equation}
where $\eta\subset\uS$ is any Borel subset, and
$\boldsymbol{\alpha}_K^*$ is the \emph{reverse radial Gauss image} given by
\begin{equation*}
  \boldsymbol{\alpha}_K^*(\eta)
  =\set{u\in\uS : \rho_K(u)u\in\nu_K^{-1}(\eta)}.
\end{equation*}
Here $\nu_K^{-1}$ is the inverse Gauss map of $K$.
From this definition, one can check that
\begin{equation} \label{eq:106}
  \int_{\uS} g(x) \dd\widetilde{C}_q(K,Q,x) 
  = \frac{1}{n}\int_{\uS}g(\nu_K(\rho_K(u)u))\rho_K^q(u)\rho_Q^{n-q}(u)\dd u
\end{equation} 
for any bounded Borel function $g$ on $\uS$.
We define the \emph{$q$-th dual mixed volume} $\widetilde{V}_q(K,Q)$ as that
\begin{equation}\label{dmV}
  \widetilde{V}_q(K,Q)=\frac{1}{n}\int_{\uS}\rho_K^q(u)\rho_Q^{n-q}(u)\dd u.
\end{equation}
Obviously, for any $\lambda>0$, there is
\begin{equation} \label{eq:82}
  \CTq(\lambda K,Q,\cdot) = \lambda^q \CTq(K,Q,\cdot),
  \qquad 
  \VTq(\lambda K,Q) = \lambda^q \VTq(K,Q).
\end{equation}
For $K_1\subset K_2$, we have the following monotonicity:
\begin{align}
  \VTq(K_1,Q) &\leq \VTq(K_2,Q),
                \quad \text{when } q>0,
                \label{eq:88} \\
  \VTq(K_1,Q) &\geq \VTq(K_2,Q),
                \quad \text{when } q<0.
                \label{eq:93}
\end{align}
The \emph{dual mixed entropy} $\widetilde{E}(K,Q)$ is defined as
\begin{equation} \label{dmEnt}
  \widetilde{E}(K,Q)
  = \frac{1}{n} \int_{\uS}
  \log\left( \frac{\rho_K(u)}{\rho_Q(u)} \right)
  \rho_Q^n(u) \dd u.
\end{equation}

Denote the set of positive continuous functions on $\uS$ by $C^+(\uS)$,
and the set of positive continuous even functions on $\uS$ by $C_e^+(\uS)$.
For $g\in C^+(\uS)$, the \emph{Alexandrov body associated with $g$} is defined by
\begin{equation*}
  K_g := \bigcap_{x\in \uS} \set{\xi\in\R^n \, :\, \xi \cdot x\leq g(x)}.
\end{equation*}
One can see that $K_g$ is a bounded convex body and $K_g\in\K_o^n$.
Note that
\begin{equation*}
  h_{K_g}(x) \leq g(x), \quad \forall\, x\in \uS.
\end{equation*}

The following variational formula was obtained in
\cite[Theorem 6.2]{LYZ.Adv.329-2018.85}.

\begin{lemma} \label{lemVari}
Let $\{g_t\}_{t\in(-\epsilon, \epsilon);\,\epsilon>0}$ be a family of positive
continuous functions on $\uS$.
If there is a continuous function $\varphi$ on $\uS$ such that
\begin{equation*}
  \lim_{t\to 0} \frac{g_t -g_0}{t}=\varphi
  \quad \text{uniformly on } \uS,
\end{equation*}
then for $Q\in\mathcal{S}_o^n$ we have that
\begin{equation*}
  \lim_{t\to 0} \frac{\widetilde{E}(K_{g_t},Q) -\widetilde{E}(K_{g_0},Q)}{t}
  = \int_{\uS} \varphi h_{K_{g_0}}^{-1} \dd\widetilde{C}_0(K_{g_0},Q),
\end{equation*}
and that for $q\neq0$,
\begin{equation*}
  \lim_{t\to 0} \frac{\widetilde{V}_q(K_{g_t},Q) -\widetilde{V}_q(K_{g_0},Q)}{t}
  = q\int_{\uS} \varphi h_{K_{g_0}}^{-1} \dd\widetilde{C}_q(K_{g_0},Q),
\end{equation*}
where $K_{g_t}$ is the Alexandrov body associated with $g_t$, and $h_{K_{g_0}}$
is the support function of $K_{g_0}$.
\end{lemma}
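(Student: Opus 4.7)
The plan is to reduce the variational formula for $\widetilde{V}_q$ (and $\widetilde{E}$) to a pointwise differentiation of the radial function of the Alexandrov body, followed by an application of dominated convergence and the change-of-variables formula \eqref{eq:106}. Writing $K_t:=K_{g_t}$ and using the formula
\[
  \rho_{K_t}(u)=\min_{x\in\uS,\,u\cdot x>0} \frac{g_t(x)}{u\cdot x},
\]
the first task is to establish uniform bilateral bounds on the deformation. Since $g_t=g_0+t\varphi+o(t)$ uniformly, one gets constants $C_t=1+O(t)$ and $c_t=1-O(t)$ with $c_tg_0\le g_t\le C_tg_0$, hence $c_tK_0\subset K_t\subset C_tK_0$, which yields $|\rho_{K_t}-\rho_{K_0}|\le O(t)\rho_{K_0}$ uniformly on $\uS$. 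This uniform Lipschitz bound in $t$ will serve as the dominating function when passing $t\to 0$ under the integrals defining $\widetilde V_q$ and $\widetilde E$.

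The core step is pointwise differentiability of $\rho_{K_t}$ at $t=0$ for almost every $u\in\uS$. For $u$ such that the boundary point $\rho_{K_0}(u)u$ is a smooth point of $\partial K_0$ (which holds for $\mathcal{H}^{n-1}$-a.e.\ $u$), the minimum in the displayed formula is uniquely attained at $x^\ast:=\nu_{K_0}(\rho_{K_0}(u)u)$ and we have $g_0(x^\ast)=h_{K_0}(x^\ast)=\rho_{K_0}(u)(u\cdot x^\ast)$. A short argument, modeled on Aleksandrov's classical lemma, shows that any near-minimizer of $x\mapsto g_t(x)/(u\cdot x)$ must converge to $x^\ast$ as $t\to0$, whence
\[
  \frac{d}{dt}\bigg|_{t=0}\rho_{K_t}(u)
  =\frac{\varphi(x^\ast)}{u\cdot x^\ast}
  =\varphi(x^\ast)\,\frac{\rho_{K_0}(u)}{h_{K_0}(x^\ast)}.
\]
Plugging this into the difference quotient of $\widetilde V_q(K_t,Q)=\tfrac1n\int\rho_{K_t}^q\rho_Q^{n-q}$ and applying dominated convergence (using the bound from the previous paragraph, which controls both $\rho_{K_t}^{q}$ and its derivative uniformly), one obtains
\[
  \lim_{t\to0}\frac{\widetilde V_q(K_t,Q)-\widetilde V_q(K_0,Q)}{t}
  =\frac{q}{n}\int_{\uS}\varphi(\nu_{K_0}(\rho_{K_0}(u)u))\,h_{K_0}^{-1}(\nu_{K_0}(\cdots))\,\rho_{K_0}^{q}(u)\rho_Q^{n-q}(u)\dd u.
\]
A direct comparison with \eqref{eq:106} applied to the bounded Borel function $g(x)=\varphi(x)h_{K_0}^{-1}(x)$ then produces the stated identity. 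The entropy case is identical in structure, replacing $\rho^q/q$ by $\log(\rho/\rho_Q)$, so that the factor $q\rho^{q-1}$ becomes $\rho^{-1}$; the integrand in the limit gains the factor $\rho_Q^n$ in place of $\rho_K^q\rho_Q^{n-q}$ at the level of the $u$-integral, which matches $\dd\widetilde{C}_0$ under the same change of variables.

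The main obstacle is the middle step: justifying the pointwise formula for $(d/dt)\rho_{K_t}(u)$ at a.e.\ $u$. The difficulty is that outside a null set the minimum defining $\rho_{K_0}(u)$ need not be uniquely attained, and even when it is, one must rule out that a minimizer of $g_t/(u\cdot x)$ wanders off to some far point $x$ where $u\cdot x$ is small. This is controlled by the strict-positivity and continuity of $g_0$ and the uniform closeness of $g_t$ to $g_0$: if a purported minimizer stays away from $x^\ast$, a compactness argument against the uniqueness of $x^\ast$ (at smooth boundary points of $K_0$) yields a contradiction. Once this is in hand, the rest is careful bookkeeping via \eqref{eq:106} and the change from $u$-integration to $x$-integration built into the definition \eqref{gdCm} of $\widetilde C_q(K_0,Q,\cdot)$.
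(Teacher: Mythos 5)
The paper does not prove Lemma \ref{lemVari}; it quotes it directly from Lutwak--Yang--Zhang \cite[Theorem~6.2]{LYZ.Adv.329-2018.85}. Your proposal follows essentially the same strategy as that cited proof: write $\rho_{K_{g_t}}(u)=\min_{x\cdot u>0}g_t(x)/(u\cdot x)$, derive the uniform $c_tK_{g_0}\subset K_{g_t}\subset C_tK_{g_0}$ sandwich to dominate the difference quotients, invoke an Aleksandrov-type lemma to get $\partial_t\big|_{t=0}\rho_{K_{g_t}}(u)=\varphi(\nu_{K_{g_0}}(\rho_{K_{g_0}}(u)u))\,\rho_{K_{g_0}}(u)\,h_{K_{g_0}}^{-1}(\nu_{K_{g_0}}(\rho_{K_{g_0}}(u)u))$ for $\mathcal{H}^{n-1}$-a.e.\ $u$, then pass to the limit by dominated convergence and convert the $u$-integral via \eqref{eq:106}. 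The technical obstacle you correctly isolate---that near-minimizers of $g_t(x)/(u\cdot x)$ must converge to the unique outer normal $\nu_{K_{g_0}}(\rho_{K_{g_0}}(u)u)$ when that boundary point is smooth---is precisely the content of the Aleksandrov lemma underlying the cited reference, so the proposal is sound and consistent with the source.
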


For $Q\in\mathcal{S}_o^n$, denote its volume by $\VOL(Q)$.
For a finite measure $\mu$ on $\uS$, write $|\mu|=\mu(\uS)$.
We use $\omega_{n-1}$ and $\kappa_n$ to denote the surface area and the volume
of the unit ball in $\R^n$ respectively.

\section{An integral estimate}
\label{sec3}

In this section, we prove the integral estimate Lemma \ref{lemInt},
which will be used to estimate dual quermassintegrals in the next section,
and may be of interest in its own right.

Let $A \in G\!L(n)$ be any diagonal matrix given by
\begin{equation}\label{A}
  A=\DIAG\left( s_{1}, \cdots, s_{n} \right)
  \ \text{ with }\ 
  s_{1}\ge \cdots \ge s_{n}>0.
\end{equation}


\begin{lemma}\label{lemInt}
For any positive number $\alpha>0$, we have
\begin{equation}\label{int}
  \int_{\uS} \frac{\dd x}{|Ax|^{\alpha}}
  \approx \begin{cases}
    1/(s_1\cdots s_{n}s_n^{\alpha-n}),
    & \text{when } \alpha\geq n, \\
    1/(s_1\cdots s_{\lceil \alpha \rceil}s_{\lceil \alpha \rceil}^{\alpha-{\lceil \alpha \rceil}}),
    & \text{when non-integer } \alpha<n, \\
    (1+\log(s_{\alpha}/s_{\alpha+1}))/(s_1\cdots s_{\alpha}),
    & \text{when integer } \alpha<n.
  \end{cases}
\end{equation}  
Here $\lceil\alpha\rceil$ is the smallest integer that is greater than or equal
to $\alpha$, 
and ``$\approx$'' means the ratio of the two sides has positive upper and lower
bounds depending only on $n$ and $\alpha$.
\end{lemma}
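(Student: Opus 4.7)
The plan is to prove the lemma by induction on the dimension $n$, reducing each step to a one-dimensional integral in a new variable combined with a spherical integral of the same form in dimension $n-1$. Parametrising $x\in\uS$ as $x=(\sqrt{1-x_n^2}\,\omega,\,x_n)$ with $\omega\in\mathbb{S}^{n-2}$ and $x_n\in(-1,1)$, one has $dx=(1-x_n^2)^{(n-3)/2}d\omega\,dx_n$ and
\begin{equation*}
  |Ax|^2=(1-x_n^2)|A'\omega|^2+s_n^2 x_n^2, \qquad A'=\DIAG(s_1,\ldots,s_{n-1}).
\end{equation*}
Applying the substitution $t=s_n x_n/\sqrt{1-x_n^2}$ on the hemisphere $x_n>0$, together with the identity $|A'\omega|^2+t^2=\sum_{i=1}^{n-1}(s_i^2+t^2)\omega_i^2$ valid for $\omega\in\mathbb{S}^{n-2}$, produces the reduction formula
\begin{equation*}
  \int_{\uS}\frac{dx}{|Ax|^\alpha}
  =2\,s_n^{\,n-1-\alpha}\int_0^\infty\frac{dt}{(s_n^2+t^2)^{(n-\alpha)/2}}\int_{\mathbb{S}^{n-2}}\frac{d\omega}{|\widetilde A(t)\omega|^\alpha},
\end{equation*}
where $\widetilde A(t)=\DIAG\bigl(\sqrt{s_1^2+t^2},\ldots,\sqrt{s_{n-1}^2+t^2}\bigr)$ still has diagonal entries in non-increasing order. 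Thus the inner integral is the same object one dimension down, with modified singular values $\widetilde s_i(t)=\sqrt{s_i^2+t^2}\approx\max(s_i,t)$, and the induction hypothesis applies.

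The base case $n=1$ is immediate, since $\int_{\mathbb{S}^0}|Ax|^{-\alpha}dx=2s_1^{-\alpha}$, which matches every branch of \eqref{int}. For the induction step I insert the $(n-1)$-dimensional estimate into the reduction formula and split the $t$-integral along the breakpoints $0,s_n,s_{n-1},\ldots,s_1,\infty$. On each sub-interval each $\widetilde s_i(t)$ stabilises to either $s_i$ (when $t\lesssim s_i$) or $t$ (when $t\gtrsim s_i$), so the inductive output collapses to a single power of $t$ (or a power times $\log t$ when $\alpha$ is a critical integer). Combined with the outer factor $(s_n^2+t^2)^{-(n-\alpha)/2}$, one is left with finitely many elementary integrals of the form $\int_a^b t^\beta(s_n^2+t^2)^{-\gamma}\,dt$ whose two-sided sharp bounds are standard. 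Summing these contributions and multiplying by $s_n^{n-1-\alpha}$ yields the three branches of \eqref{int}; a spot-check in $n=2$ confirms that the sub-interval $(0,s_n)$ already delivers the main term in every regime, with the later sub-intervals contributing the same order of magnitude up to possible logarithmic enhancements.

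The main obstacle is the case bookkeeping. The induction hypothesis is itself piecewise, and as $t$ sweeps $(0,\infty)$ the ``active'' branch changes whenever $t$ crosses some $s_k$: additional $\widetilde s_i(t)$'s merge with $t$, which may shift the effective position of $\alpha$ relative to the induction thresholds and can spawn a logarithmic factor from the elementary $t$-integral whenever the combined exponent lands on $-1$. One must verify that all sub-interval contributions assemble into the right total: the $\log$ appears precisely when $\alpha$ is an integer strictly less than $n$, is absent when $\alpha=n$, and is replaced by the power factor $s_n^{n-\alpha}$ when $\alpha>n$. This is the only delicate part, but it reduces to careful tracking of the exponents $\beta$ and $\gamma$ across the finitely many breakpoints.
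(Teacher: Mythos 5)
Your reduction formula is correct---I have checked the coarea computation and the identity $|A'\omega|^2+t^2=|\widetilde A(t)\omega|^2$ on $\mathbb{S}^{n-2}$, and the observation $\widetilde s_i(t)\approx\max(s_i,t)$ is exactly the right way to feed the inductive bound into the $t$-integral. So the skeleton of the argument (induction on dimension via a slicing formula) is sound and genuinely different from the paper's route. The paper instead proves two standalone lemmas---a dimension-reducing formula based on a one-shot coarea argument (their Lemma~\ref{lemL}), which discards all but the top $1+\lfloor\beta\rfloor$ singular values while leaving the exponent untouched, and a power-reducing identity $\int_{\mathbb{S}^{m-1}}|Bx|^{-\gamma}\dd x=|\det B|^{-1}\int_{\mathbb{S}^{m-1}}|B^{-1}x|^{-(m-\gamma)}\dd x$ (their Lemma~\ref{lemPRF})---and composes them to land directly on a low-dimensional integral with a small exponent. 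That composition sidesteps the sub-interval bookkeeping entirely: the $\alpha\ge n$ branch is handled by one power-reduction, and the $\alpha<n$ branches by a dimension-reduction followed by a power-reduction, after which only an explicit $\mathbb{S}^0$ or $\mathbb{S}^1$ integral remains.

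However, your write-up has a real gap: the induction step is announced but not executed, and the one concrete claim you make about it is false. You assert that ``a spot-check in $n=2$ confirms that the sub-interval $(0,s_n)$ already delivers the main term in every regime.'' It does not. Take $n=2$, $1<\alpha<2$, $s_1\gg s_2$. After multiplying by the prefactor $s_2^{1-\alpha}$, the contribution from $t\in(0,s_2)$ is $\approx s_1^{-\alpha}$, whereas the claimed total is $s_1^{-1}s_2^{1-\alpha}$, which is larger by the factor $(s_1/s_2)^{\alpha-1}$; the dominant contribution actually comes from $t\in(s_2,s_1)$ and $t\in(s_1,\infty)$. So the dominant sub-interval shifts with $\alpha$, and establishing which one wins in each regime, and that the others are controlled, is precisely the content of the induction step. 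Until that accounting is carried out for general $n$ (with the extra wrinkle that the inductive formula itself changes branch as $t$ crosses each $s_k$, and a $\log$ must emerge exactly for integer $\alpha<n$ and nowhere else), the proposal remains a plausible plan rather than a proof. I would either complete that case analysis or adopt the paper's two-lemma reduction, which replaces the whole induction with two clean structural identities.
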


When Jian-Lu-Wang were studying the centroaffine Minkowski problem, a broader
class of integrals, including \eqref{int}, had already been studied in detail
\cite[Lemma 4.1]{JLW.JFA.274-2018.826}.
In fact, \eqref{int} is suggested in their long proof.
When only proving \eqref{int}, a very small part of their proof is just needed.
For readers' convenience, we here provide a self-contained proof following
\cite{JLW.JFA.274-2018.826}.

One main technique is the following dimension-reducing formula.

\begin{lemma}[Dimension-reducing formula] \label{lemL}
Assume $m\geq1$ is a positive integer,  
and $B$ is an $m$-order positive definite diagonal matrix.
For any $\beta\in(0,m)$, we have 
\begin{equation}\label{@eq:L}
  \int_{\mathbb{S}^{m-1}} \frac{\dd x}{|Bx|^{\beta}} 
  \approx
  \int_{\mathbb{S}^{\lfloor\beta\rfloor}}
  \frac{\dd y}{|B_{\scriptscriptstyle 1+\lfloor\beta\rfloor}y|^{\beta}},
\end{equation}
where $\lfloor\beta\rfloor$ is the largest integer that is less than or equal to $\beta$,
and $B_{ 1+\lfloor\beta\rfloor}$ is a diagonal matrix whose diagonal entries are
the top $1+\lfloor\beta\rfloor$ largest diagonal entries of $B$. 
\end{lemma}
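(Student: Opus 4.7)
The plan is to exploit a spherical-join parametrisation of $\mathbb{S}^{m-1}$ that cleanly separates the ``top'' and ``bottom'' eigendirections of $B$, thereby reducing $|Bx|^2$ to a trigonometric convex combination of $|B_1 w|^2$ and $|B_2 v|^2$. Two-sided bounds on $|Bx|$ in terms of $|B_1 w|$ then essentially fall out for free.

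First I would reorder coordinates (which is harmless since the integrand on the left of \eqref{@eq:L} is invariant under a simultaneous permutation of the diagonal of $B$ and the coordinates of $x$) so that $B=\DIAG(b_1,\ldots,b_m)$ with $b_1\geq\cdots\geq b_m>0$, set $k=\lfloor\beta\rfloor$, and split $B=B_1\oplus B_2$ with $B_1=\DIAG(b_1,\ldots,b_{k+1})$ and $B_2=\DIAG(b_{k+2},\ldots,b_m)$. The edge case $k+1=m$ is trivial (the two sides of \eqref{@eq:L} then coincide), so I would henceforth assume $k+1<m$. Next I would parametrise $x=(\cos\phi)w+(\sin\phi)v$ with $\phi\in[0,\pi/2]$, $w\in\mathbb{S}^k\subset\R^{k+1}$ and $v\in\mathbb{S}^{m-k-2}\subset\R^{m-k-1}$, use the standard surface-measure factorisation
\begin{equation*}
  \dd x=(\cos\phi)^k(\sin\phi)^{m-k-2}\dd\phi\,\dd w\,\dd v,
\end{equation*}
and record the identity $|Bx|^2=\cos^2\phi\,|B_1 w|^2+\sin^2\phi\,|B_2 v|^2$.

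For the \emph{upper bound}, I would simply drop the non-negative term $\sin^2\phi\,|B_2 v|^2$, apply Fubini, and observe that the remaining one-dimensional integral $\int_0^{\pi/2}(\cos\phi)^{k-\beta}(\sin\phi)^{m-k-2}\dd\phi$ is a finite beta-function value depending only on $m$ and $\beta$, because $k-\beta>-1$ is equivalent to $\beta<1+\lfloor\beta\rfloor$ and $m-k-2>-1$ to $k+1<m$. For the \emph{lower bound}, the main observation is that $|B_1 w|\geq b_{k+1}$ for every $w\in\mathbb{S}^k$ (since $b_{k+1}$ is the smallest diagonal entry of $B_1$), whereas $|B_2 v|\leq b_{k+2}\leq b_{k+1}\leq|B_1 w|$ uniformly in $v$; hence $|Bx|\leq|B_1 w|$ pointwise in $\phi,v$, so $|Bx|^{-\beta}\geq|B_1 w|^{-\beta}$, and integrating over the full ranges of $\phi$ and $v$ yields the matching bound with a positive constant depending only on $m$.

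The main obstacle is really only bookkeeping: extracting the convergence exponent $k-\beta>-1$ cleanly in both the integer and non-integer cases of $\beta$ (and in particular tracking what happens when $k=\beta$), handling the trivial edge case $k+1=m$ separately, and checking that all implied constants depend only on $m$ and $\beta$. No substantive analytic difficulty arises, consistent with the authors' remark that only a very small portion of the proof of \cite[Lemma~4.1]{JLW.JFA.274-2018.826} is actually needed here.
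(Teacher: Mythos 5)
Your proof is correct and proceeds by the same basic decomposition as the paper (split the coordinates into the top $1+\lfloor\beta\rfloor$ and the rest, then slice), but it handles the lower bound more cleanly. The paper parametrizes by $x=(u,v)$ with $|u|^2+|v|^2=1$, reduces via the coarea formula to a one-dimensional profile integral against $\int_{\mathbb{S}^{l-1}}|B_l y|^{-\beta}\,\dd y$, and for the lower bound has to restrict to the cap $S_*=\{|u|\geq 1/2\}$, where $|Bx|\leq 2|B_l u|$ becomes available. Your spherical-join parametrization $x=(\cos\phi)\,w+(\sin\phi)\,v$ with unit $w,v$ is a mild reparametrization of the same slicing, but it exposes the global two-sided estimate
\begin{equation*}
  (\cos\phi)\,|B_1 w| \;\leq\; |Bx| \;\leq\; |B_1 w|
\end{equation*}
pointwise: the upper bound because the diagonal of $B_2$ is dominated by $b_{k+1}=\min\DIAG B_1$ so $|B_2 v|\leq b_{k+2}\leq b_{k+1}\leq |B_1 w|$, and then $\cos^2\phi+\sin^2\phi=1$ collapses the sum. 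Both directions then reduce to convergent beta-function integrals $\int_0^{\pi/2}(\cos\phi)^{k-\beta}(\sin\phi)^{m-k-2}\,\dd\phi$ and $\int_0^{\pi/2}(\cos\phi)^{k}(\sin\phi)^{m-k-2}\,\dd\phi$ with constants depending only on $m$ and $\beta$. The payoff of your version is that no subset restriction and no separate application of the coarea identity at $\delta=0$ and $\delta=1/2$ are needed; the payoff of the paper's version is that it stays closer to the form actually used in the source reference \cite{JLW.JFA.274-2018.826}. Either way the argument is sound, and you correctly flag the edge case $1+\lfloor\beta\rfloor=m$ (where both sides coincide) and verify the exponent bookkeeping $k-\beta>-1$ and $m-k-2\geq 0$.
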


\begin{proof}{}
Write $l=1+\lfloor \beta \rfloor$.
Then $0\leq l-1\leq\beta<l\leq m$.
There is nothing to prove if $l=m$.

Now assume $l\leq m-1$.
Then $m\geq2$ and $\beta\in(0,m-1)$.
Without loss of generality, assume $B$ is given as
\begin{equation*}
  B=\DIAG( s_{1}, \cdots, s_{m})
  \ \text{ with }\ 
  s_{1}\ge \cdots \ge s_{m}>0.
\end{equation*}
For simplicity, we write
\begin{equation*} 
  u=(x_1, \cdots, x_l), \quad
  v=(x_{l+1}, \cdots, x_{m}), \quad
  N=\DIAG( s_{l+1}, \cdots, s_{m}).
\end{equation*}
Then $x=(u,v)$ and $Bx=(B_lu,Nv)$.

By the coarea formula, we have for any $0\leq\delta<1$ that
\begin{equation}\label{@eq:jlw}
  \begin{split}
    \int_{\set{x\in\mathbb{S}^{m-1}:\,\delta\leq|u|\leq1}} \frac{\dd x}{|B_lu|^\beta}
    &=\int_{\delta\leq|u|\leq1} \frac{\dd u}{\lambda(u)} \int_{|v|=\lambda(u)} \frac{\dd\sigma(v)}{|B_lu|^{\beta}} \\
    &=\omega_{m-l-1} \int_{\delta\leq|u|\leq1} \lambda(u)^{m-l-2} \frac{\dd u}{|B_lu|^{\beta}} \\
    &=\omega_{m-l-1} \int_{\delta}^1 \lambda(r)^{m-l-2} \dd r \int_{|u|=r} \frac{\dd\sigma(u)}{|B_lu|^\beta} \\
    &=\omega_{m-l-1} \int_{\delta}^1 r^{l-1-\beta} \lambda(r)^{m-l-2} \dd r
    \int_{|y|=1} \frac{\dd\sigma(y)}{|B_ly|^\beta},
  \end{split}
\end{equation}
where $\lambda(u)=\sqrt{1-|u|^2}$.
Letting $\delta=0$, \eqref{@eq:jlw} becomes into
\begin{equation}\label{@eq:int}
  \int_{\mathbb{S}^{m-1}} \frac{\dd x}{|B_lu|^\beta}
  =C_{m,\beta}
  \int_{\mathbb{S}^{l-1}} \frac{\dd y}{|B_ly|^\beta}.
\end{equation}
Letting $\delta=1/2$, \eqref{@eq:jlw} becomes into
\begin{equation}\label{@eq:IS*1}
  \int_{S_*} \frac{\dd x}{|B_lu|^\beta}
  =C_{m,\beta}
  \int_{\mathbb{S}^{l-1}} \frac{\dd y}{|B_ly|^\beta},
\end{equation}
where $S_*=\set{x\in\mathbb{S}^{m-1} : 1/2\leq|u|\leq1}$.

Observe that for any $x\in S_*$, $|u|\geq1/2$, then $|v|\leq\sqrt{3}/2$.
Therefore, 
\begin{equation*}
  |Nv|\leq s_l|v|\leq \sqrt{3}s_l/2\leq\sqrt{3}|B_lu|,
\end{equation*}
implying that $|Bx|\leq2|B_lu|$ on $S_*$.
Hence, with \eqref{@eq:IS*1}, we have 
\begin{equation}\label{eq:30}
  \int_{\mathbb{S}^{m-1}} \frac{\dd x}{|Bx|^{\beta}} 
  \geq \int_{S_*} \frac{\dd x}{|Bx|^{\beta}} 
  \geq \frac{1}{2^{\beta}} \int_{S_*} \frac{\dd x}{|B_lu|^{\beta}} 
  =C_{m,\beta} \int_{\mathbb{S}^{l-1}} \frac{\dd y}{|B_ly|^{\beta}}.
\end{equation}
On the other hand, by \eqref{@eq:int}, there is
\begin{equation*}
  \int_{\mathbb{S}^{m-1}} \frac{\dd x}{|Bx|^{\beta}} 
  \leq \int_{\mathbb{S}^{m-1}} \frac{\dd x}{|B_lu|^{\beta}} 
  =C_{m,\beta} \int_{\mathbb{S}^{l-1}} \frac{\dd y}{|B_ly|^{\beta}},
\end{equation*} 
which together with \eqref{eq:30} yields the conclusion \eqref{@eq:L}.
\end{proof}

As a special case of the variable substitution formula 
\cite[Lemma 2.2]{Lu.SCM.61-2018.511},
we have 

\begin{lemma}[Power-reducing formula] \label{lemPRF}
Assume $m\geq1$ is a positive integer,  
and $B$ is an $m$-order invertible matrix.
For any $\gamma\in\R$, we have 
\begin{equation} \label{scm2.2}
  \int_{\mathbb{S}^{m-1}} \frac{\dd x}{|Bx|^{\gamma}}
  = \frac{1}{|\det B|}
  \int_{\mathbb{S}^{m-1}} \frac{\dd x}{|B^{-1}x|^{m-\gamma}}.
\end{equation}
\end{lemma}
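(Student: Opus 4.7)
The plan is to prove this via a standard polar-coordinates-plus-linear-substitution trick: introduce an auxiliary radial test function so that both spherical integrals arise naturally from the same integral over $\mathbb{R}^m$, and then use the linear change of variables $y=Bx$ to connect them.

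Concretely, I would fix a nonnegative smooth $\psi:(0,\infty)\to\R$ supported in a compact subset of $(0,\infty)$ with $c_\psi := \int_0^\infty \psi(r)r^{m-1-\gamma}\dd r \in (0,\infty)$, and consider
\begin{equation*}
I := \int_{\R^m} \psi(|x|)\,|Bx|^{-\gamma}\dd x.
\end{equation*}
The compact support keeps $I$ finite regardless of the sign of $\gamma$, since it bounds $|x|$ and hence $|Bx|$ away from $0$ and $\infty$.

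Next I would compute $I$ in two ways. First, in polar coordinates $x=r\omega$, using $|Bx|=r|B\omega|$, the variables separate to give $I = c_\psi \cdot \int_{\uS[m-1]}|B\omega|^{-\gamma}\dd\omega$. (Of course $\uS[m-1]=\mathbb{S}^{m-1}$; the paper's macro is defined for $n-1$, so I would write this out.) Second, I would apply the substitution $y=Bx$ (Jacobian $|\det B|^{-1}$) to obtain $I = |\det B|^{-1}\int_{\R^m}\psi(|B^{-1}y|)\,|y|^{-\gamma}\dd y$; then switch to polar coordinates $y=s\omega'$ and finally make the inner substitution $t=s|B^{-1}\omega'|$ to separate the radial and angular parts, giving
\begin{equation*}
I = \frac{c_\psi}{|\det B|}\int_{\mathbb{S}^{m-1}}\frac{\dd\omega'}{|B^{-1}\omega'|^{m-\gamma}}.
\end{equation*}
Equating the two expressions and dividing by $c_\psi$ yields \eqref{scm2.2}.

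There is no real obstacle: the only subtlety is choosing $\psi$ so that the radial factor is nonzero and finite, which is trivially handled by a bump function supported on, say, $[1,2]$. Alternatively, one could avoid the test function altogether by directly computing the Jacobian of the diffeomorphism $\phi:\mathbb{S}^{m-1}\to\mathbb{S}^{m-1}$, $\phi(x)=Bx/|Bx|$: comparing $\text{vol}(BU_\Omega)=|\det B|\cdot|\Omega|/m$ with the star-shaped formula $\text{vol}(BU_\Omega)=\frac{1}{m}\int_{\phi(\Omega)}|B^{-1}y|^{-m}\dd y$ for the cone $U_\Omega=\{t\omega:t\in[0,1],\,\omega\in\Omega\}$ identifies the surface measure pushforward, and then the key identity $|Bx|\cdot|B^{-1}\phi(x)|=1$ converts the $\gamma$-exponent in \eqref{scm2.2} into the $(m-\gamma)$-exponent after the change of variables. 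Either route is short; I would use the test-function route since it handles all $\gamma\in\R$ uniformly without worrying about whether the integrals are proper.
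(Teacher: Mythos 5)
Your proof is correct. Note that the paper itself does not prove this lemma: it simply states it as a special case of a variable substitution formula from \cite[Lemma 2.2]{Lu.SCM.61-2018.511} and gives no argument. Your test-function route is a clean, self-contained derivation and the calculations all check out: the compact support of $\psi$ in $(0,\infty)$ keeps $I$ finite for every $\gamma\in\R$ (since $B$ is invertible, $|Bx|$ is bounded above and below on the support), the first polar decomposition $x=r\omega$ gives $I=c_\psi\int_{\mathbb{S}^{m-1}}|B\omega|^{-\gamma}\dd\omega$, and after the linear substitution $y=Bx$ followed by polar coordinates $y=s\omega'$ and the rescaling $t=s|B^{-1}\omega'|$ (permitted because $|B^{-1}\omega'|>0$ on the sphere), the radial factor separates out as exactly $c_\psi$ and leaves $\frac{c_\psi}{|\det B|}\int_{\mathbb{S}^{m-1}}|B^{-1}\omega'|^{-(m-\gamma)}\dd\omega'$. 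Dividing by $c_\psi\neq 0$ gives \eqref{scm2.2}. Your alternative sketch via the diffeomorphism $\phi(x)=Bx/|Bx|$ and the identity $|Bx|\,|B^{-1}\phi(x)|=1$ also works, but, as you say, requires a bit more care about the surface-measure pushforward; the test-function version avoids that and handles all $\gamma$ uniformly. Either would serve as a legitimate replacement for the external citation.
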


With these two Lemmas \ref{lemL} and \ref{lemPRF}, one can easily prove Lemma \ref{lemInt}.

\begin{proof}[\textbf{Proof of Lemma \ref{lemInt}}]
\textup{(a)}  
When $\alpha\geq n$.
By Lemma \ref{lemPRF}, there is
\begin{equation*}
  \begin{split}
    \int_{\uS} \frac{\dd x}{|Ax|^{\alpha}}
    &= \frac{1}{\det A} \int_{\uS} |A^{-1}x|^{\alpha-n} \dd x \\
    &\approx \frac{1}{\det A}
    \int_{\uS} \left(\abs{s_1^{-1}x_1}^{\alpha-n} +\cdots+ \abs{s_n^{-1}x_n}^{\alpha-n} \right) \dd x
    \approx \frac{1}{\det A}s_n^{n-\alpha}.
  \end{split}
\end{equation*}

\textup{(b)}  
When $\alpha< n$.
Applying Lemma \ref{lemL}, we have with $l=1+\lfloor\alpha\rfloor$ that
\begin{equation}\label{eq:71}
  \int_{\uS} \frac{\dd x}{|Ax|^{\alpha}} 
  \approx \int_{\mathbb{S}^{l-1}} \frac{\dd y}{|A_ly|^{\alpha}}
  = \frac{1}{\det A_l} \int_{\mathbb{S}^{l-1}} \frac{\dd y}{|Py|^{l-\alpha}},
\end{equation}
where the last equality is due to Lemma \ref{lemPRF}, and
$P=(A_l)^{-1}=\DIAG(s_1^{-1},\cdots,s_l^{-1})$.

If $\alpha$ is a non-integer, there is
$l=\lceil \alpha \rceil$ and $l-\alpha\in(0,1)$.
By Lemma \ref{lemL} again,
\begin{equation*}
  \int_{\mathbb{S}^{l-1}} \frac{\dd y}{|Py|^{l-\alpha}}
  \approx \int_{\mathbb{S}^{0}} \frac{\dd z}{|P_1z|^{l-\alpha}} 
  = \frac{2}{s_l^{\alpha-l}},
\end{equation*}
which together with \eqref{eq:71} yields the conclusion \eqref{int} for any
non-integer $\alpha<n$.

If $\alpha$ is an integer, there is $l=\alpha+1$.
By Lemma \ref{lemL} again, we obtain
\begin{multline*}
  \int_{\mathbb{S}^{l-1}} \frac{\dd y}{|Py|^{l-\alpha}}
  = \int_{\mathbb{S}^{\alpha}} \frac{\dd y}{|Py|}
  \approx \int_{\mathbb{S}^{1}} \frac{\dd z}{|P_2z|} 
  = 4\int_{0}^{\frac{\pi}{2}}
  \frac{\dd t}{\sqrt{s_l^{-2}\sin^2t+s_{\alpha}^{-2}\cos^2t}} \\
  \approx
  \int_{0}^{1} \frac{\dd t}{s_l^{-1}t+s_{\alpha}^{-1}}
  +\int_{1}^{\frac{\pi}{2}} \frac{\dd t}{s_l^{-1}} 
  \approx
  s_l\left(1+\log\frac{s_\alpha}{s_l}\right),
\end{multline*}
which together with \eqref{eq:71} yields the conclusion \eqref{int} for any
integer $\alpha<n$.
\end{proof}

\section{The case $0<q<n$}
\label{sec4}

We shall prove Theorems \ref{thm1} and \ref{thm2} in this section.
In fact, we mainly prove the following existence lemma.

\begin{lemma}\label{lemq0n}
Assume $0<q<n$, and $Q$ is an origin-symmetric star body in $\R^n$.
If $\mu$ is a finite even Borel measure on $\uS$ satisfying
the following $q$-th subspace mass inequality:
\begin{equation} \label{sMassIneq}
  \frac{\mu(\uS\cap\xi_i)}{\mu(\uS)}<
  \min\set{\frac{i}{q},1}
\end{equation}
for any proper $i$-dimensional subspace $\xi_i\subset\R^n$ with
$i=1,\cdots,n-1$,
then there exists an origin-symmetric convex body $K$ in $\R^n$ satisfying
\begin{equation}\label{eq:113}
  \widetilde{C}_q(K,Q,\cdot)=\mu.
\end{equation}
\end{lemma}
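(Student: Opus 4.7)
The plan is to realize the desired $K$ as a dilation of a maximizer of the scale-invariant functional
\[
  \Phi_q(K) := \frac{1}{|\mu|} \int_{\uS} \log h_K \, d\mu - \frac{1}{q} \log \widetilde{V}_q(K, Q), \qquad K \in \K_e^n.
\]
Scale invariance follows from \eqref{eq:82}. Applying Lemma \ref{lemVari} to $g_t = h_K + t\varphi$ with continuous even $\varphi$, a maximizer $K_0 \in \K_e^n$ of $\Phi_q$ satisfies the Euler--Lagrange identity
\[
  \frac{1}{\widetilde{V}_q(K_0, Q)} \widetilde{C}_q(K_0, Q, \cdot) = \frac{1}{|\mu|} \mu,
\]
so that the rescaling $K = \lambda K_0$ with $\lambda = (|\mu|/\widetilde{V}_q(K_0, Q))^{1/q}$ satisfies \eqref{eq:113} via \eqref{eq:82}.

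To produce the maximizer, I would take a maximizing sequence $\{K_j\} \subset \K_e^n$ of $\Phi_q$ normalized by $\widetilde{V}_q(K_j, Q) = 1$. From the case $i = n-1$ of \eqref{sMassIneq}, $\mu$ is not concentrated on any great subsphere, and a standard argument then forces $\{K_j\}$ to be bounded: were $\DIAM(K_j) \to \infty$, the normalization would confine $h_{K_j}$ to be small on a set of positive $\mu$-measure, driving $\int \log h_{K_j} \, d\mu \to -\infty$ and contradicting maximization. Blaschke's selection theorem then yields a subsequence $K_j \to K_0$ for some compact origin-symmetric convex $K_0 \subset \R^n$.

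The main obstacle is proving $K_0 \in \K_e^n$, i.e.\ that $K_0$ does not collapse into a proper subspace. Arguing by contradiction, suppose $K_0 \subset \xi_i$ for some proper $i$-dimensional subspace, $1 \leq i \leq n-1$. I would construct a thickened test body $K_\varepsilon$ by slightly enlarging $K_0$ along directions orthogonal to $\xi_i$, and show $\Phi_q(K_\varepsilon) > \Phi_q(K_0)$ for small $\varepsilon$. This demands a sharp two-sided control of $\widetilde{V}_q$ on such thin bodies. John's theorem sandwiches any origin-symmetric convex body between $E$ and $\sqrt{n}\,E$ for some origin-centered ellipsoid $E = AB^n$ with $A = \DIAG(s_1, \ldots, s_n)$, so by \eqref{eq:88} and \eqref{eq:82} the estimate reduces to $\widetilde{V}_q(E, Q)$, which from \eqref{dmV} satisfies
\[
  \widetilde{V}_q(E, Q)
  = \frac{1}{n} \int_{\uS} \frac{\rho_Q^{n-q}(u)}{|A^{-1} u|^{q}} \dd u
  \approx \int_{\uS} \frac{\dd u}{|A^{-1} u|^{q}},
\]
the implicit constants absorbing the positive upper and lower bounds of $\rho_Q$ available since $Q \in \mathcal{S}_o^n$. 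Lemma \ref{lemInt} applied with $\alpha = q$ then gives a sharp estimate in terms of the $s_i$, and the dimensional thresholds appearing in \eqref{int} match exactly the ones in \eqref{sMassIneq}. Playing the contribution of $\int \log h \, d\mu$, split according to the masses $\mu(\uS \cap \xi_i)$ and $\mu(\uS) - \mu(\uS \cap \xi_i)$ along the shrinking and the remaining directions of $K_\varepsilon$, against the sharp growth rate of $\widetilde{V}_q$ delivered by Lemma \ref{lemInt}, the strict inequality \eqref{sMassIneq} produces $\Phi_q(K_\varepsilon) > \Phi_q(K_0)$, the desired contradiction.

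The heart of the argument, and the principal technical difficulty, is this interplay between the subspace thresholds $i/q$ and the asymptotic scaling of $\widetilde{V}_q$ on thin ellipsoids; everything else is either standard variational bookkeeping or already packaged in Lemma \ref{lemInt} and Lemma \ref{lemVari}. Once non-degeneracy is in hand, Lemma \ref{lemVari} together with the usual Alexandrov-body comparison (noting $h_{K_0} \leq h_K \leq g_t$ and that $\Phi_q(K_{g_t}) \geq \Phi_q(K_0)$ in the appropriate direction) yields the Euler--Lagrange equation, and the dilation by $\lambda$ completes the proof.
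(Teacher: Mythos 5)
Your toolbox is the right one — scale‐invariant functional, Euler--Lagrange via Lemma \ref{lemVari}, John's ellipsoid reduction, and the sharp ellipsoidal estimate of Lemma \ref{lemInt} tied to the $q$-th subspace mass inequality. That is essentially the paper's machinery. But there is a genuine directional error at the core of your compactness argument that would make the proposal fail as written.

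You propose to \emph{maximize} $\Phi_q$ over $\K_e^n$, normalized by $\widetilde{V}_q(K_j,Q)=1$, and claim that $\DIAM(K_j)\to\infty$ drives $\frac{1}{|\mu|}\int\log h_{K_j}\,d\mu\to-\infty$, contradicting maximization. This is backwards. Under your normalization the second term of $\Phi_q$ vanishes identically, so $\Phi_q(K_j)=\frac{1}{|\mu|}\int\log h_{K_j}\,d\mu$, and one should track what this does on a degenerate sequence. Take the John ellipsoid $E_j$ with semi-axes $b_{1j}\le\cdots\le b_{nj}$. By Lemma \ref{lem314}, $\widetilde{V}_q(E_j,Q)\approx1$ pins down only the \emph{smallest} $\lceil q\rceil$ semi-axes (up to the logarithmic correction when $q$ is an integer), leaving $b_{nj}$ free to blow up; in that regime $h_{K_j}$ is bounded below away from $0$ and the positive contribution from the long axis makes $\int\log h_{K_j}\,d\mu\to+\infty$, not $-\infty$. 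Alternatively, if $b_{1j}\to0$, the normalization forces $b_{nj}$ to grow roughly like $e^{c/b_{1j}}$ (for $q$ an integer) or polynomially (for non-integer $q$), and the entropy lower bound \eqref{eq:81} still yields $\Phi_q\to+\infty$. In short, the supremum of $\Phi_q$ under your normalization is $+\infty$ and no maximizer exists; the correct problem is the \emph{minimization} of $\Phi_q=J$, exactly as the paper sets up in \eqref{minP}, and the subspace mass inequality is what prevents a \emph{minimizing} sequence from degenerating (via \eqref{eq:90}--\eqref{eq:91}, which force $b_{nk'}/b_{1k'}$ bounded when $J[h_{k'}]$ stays bounded).

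A secondary issue is methodological rather than fatal: you propose to argue non-degeneracy by contradiction, thickening a degenerate limit $K_0\subset\xi_i$ and showing $\Phi_q$ improves. But if $K_0$ lies in a proper subspace it is not in $\K_e^n$, $\rho_{K_0}$ vanishes on an open set, and $\Phi_q(K_0)$ is not defined, so the comparison $\Phi_q(K_\varepsilon)>\Phi_q(K_0)$ needs a careful limiting interpretation. The paper sidesteps this by bounding $J$ along the sequence \emph{before} taking limits — normalizing by $\max h_k=\sqrt{n}$ so that $b_{nk}\in[1,\sqrt n]$ automatically, applying Lemma \ref{lemEnt} to the John ellipsoids, and comparing against Lemma \ref{lem314} to deduce $b_{nk'}/b_{1k'}\le C$ directly. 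That normalization is cleaner than $\widetilde{V}_q=1$ because it decouples the upper bound from the non-degeneracy question. If you switch to minimization and adopt the $\max h_k$ normalization, your sketch aligns with the paper's proof; as currently stated it does not go through.
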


Assuming this lemma for the moment,
we can then easily prove Theorems \ref{thm1} and \ref{thm2}.
In fact,
Theorem \ref{thm1} is just Lemma \ref{lemq0n} with $1<q<n$.

\begin{proof}[\textbf{Proof of Theorem \ref{thm2}}]
The necessity is obvious.
For the sufficiency, if
$\mu$ is not concentrated on any great sub-sphere of $\uS$,
then 
it satisfies the $q$-th subspace mass inequality \eqref{sMassIneq} for $0<q\leq1$.
By Lemma \ref{lemq0n}, 
there exists an origin-symmetric convex body $K$ in $\R^n$ satisfying
$\widetilde{C}_q(K,Q,\cdot)=\mu$.
Therefore, Theorem \ref{thm2} is true.
\end{proof}

Therefore, in the rest of this section, it suffices to prove Lemma \ref{lemq0n}.
Consider the following minimizing problem:
\begin{equation}\label{minP}
  \inf\set{ J[g] : g\in C_e^+(\uS)},
\end{equation} 
where
\begin{equation} \label{J}
  J[g]=
  \frac{1}{|\mu|} \int_{\uS} \log g\, \dd\mu 
  -\frac{1}{q}\log\widetilde{V}_q(K_g,Q).
\end{equation}
Here we recall that $K_g$ is the Alexandrov body associated with $g$,
and $\VTq$ is the $q$-th dual mixed volume given in \eqref{dmV}.
In the following, we will prove \eqref{minP} has a solution $h$, and
a multiple of $h$ is a solution to Eq. \eqref{eq:113}.

\subsection{An entropy-type integral estimate}

We first deal with the part $\frac{1}{|\mu|} \int_{\uS} \log g\, \dd\mu$.
It is based on an appropriate spherical partition,
which was introduced in \cite{BLYZ.JAMS.26-2013.831};
see also \cite{BLY+.Adv.356-2019.106805}.

\begin{lemma}\label{lemEnt}
Assume $0<q<n$, and $\mu$ is a finite even Borel measure on $\uS$ satisfying
the $q$-th subspace mass inequality \eqref{sMassIneq}.
Then for any sequence of origin-centered ellipsoids $\set{E_k}$
with lengths of the semi-axes $b_{1k}\leq\cdots\leq b_{nk}$, 
there exists a subsequence $\set{E_{k'}}$,
two small positive numbers $\epsilon_0$, $\delta_0$,
and an integer $k_0$, such that for any $k'\geq k_0$,
\begin{equation} \label{eq:1}
  \frac{1}{|\mu|} \int_{\uS} \log h_{E_{k'}} \dd\mu
  \geq
  \log\left(
    \frac{\delta_0}{2} 
    b_{nk'}^{\epsilon_0}
    b_{1k'}^{-\epsilon_0}
    b_{\lceil q \rceil k'}^{(q-\lceil q \rceil)/q}
    \prod_{i=1}^{\lceil q \rceil} b_{ik'}^{1/q}
  \right).
\end{equation} 
\end{lemma}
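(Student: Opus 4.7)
The plan is to estimate $\int_\uS \log h_{E_k}\dd\mu$ via a spherical partition adapted to the principal axes of $E_k$, in the spirit of B\"or\"oczky-Lutwak-Yang-Zhang and their successors. Writing $E_k$ through its orthonormal principal directions $u_{1k},\ldots,u_{nk}$ with lengths $b_{1k}\le\cdots\le b_{nk}$, so that $h_{E_k}(x)^2=\sum_i b_{ik}^2(x\cdot u_{ik})^2$, compactness of $O(n)$ allows us to pass to a subsequence (still indexed by $k$) along which $u_{ik}\to u_i$, giving an orthonormal basis of $\R^n$. For each $i\in\{1,\ldots,n-1\}$, $\xi_i:=\SPAN\{u_1,\ldots,u_i\}$ is a proper $i$-dimensional subspace, and the subspace mass hypothesis \eqref{sMassIneq} applies to it.

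I would then fix a small $\delta\in(0,1/\sqrt{n})$ and partition $\uS$ into the sets $\sigma_i^\delta=\{x\in\uS:|x\cdot u_i|\ge\delta\text{ and }|x\cdot u_j|<\delta\ \forall j>i\}$. The bound $\delta<1/\sqrt{n}$ makes this a genuine partition, and the cumulative set $\bigcup_{j\le i}\sigma_j^\delta=\{x:|x\cdot u_j|<\delta\ \forall j>i\}$ is open and decreases to $\uS\cap\xi_i$ as $\delta\to 0^+$. Continuity of the finite measure $\mu$ on decreasing sequences of open sets, combined with the strict inequality \eqref{sMassIneq}, allows me to choose $\delta$ small enough that
\[\frac{1}{|\mu|}\mu\Big(\bigcup_{j\le i}\sigma_j^\delta\Big)<\min\Big(\frac{i}{q},1\Big)-\epsilon_0\qquad\text{for every }i=1,\ldots,n-1\]
with a common $\epsilon_0>0$. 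On $\sigma_i^\delta$ the estimate $h_{E_k}(x)\ge b_{ik}|x\cdot u_{ik}|$ together with $u_{ik}\to u_i$ and $|x\cdot u_i|\ge\delta$ yields $h_{E_k}(x)\ge (\delta/2)b_{ik}$ uniformly for all sufficiently large $k$. Setting $m_i=\mu(\sigma_i^\delta)/|\mu|$ and $\beta_i=\log b_{ik}$, integration gives
\[\frac{1}{|\mu|}\int_\uS\log h_{E_k}\dd\mu\ge\log\frac{\delta}{2}+\sum_{i=1}^n m_i\beta_i.\]

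To close the estimate it remains to show $\sum m_i\beta_i\ge\sum c_i\beta_i+\epsilon_0(\beta_n-\beta_1)$ with the target coefficients $c_i=1/q$ for $i\le l-1$, $c_l=(q-l+1)/q$, and $c_i=0$ for $i>l$, where $l=\lceil q\rceil$: these sum to $1$ and satisfy $\sum c_i\beta_i=\log(b_{lk}^{(q-l)/q}\prod_{i=1}^l b_{ik}^{1/q})$, so the combined lower bound becomes exactly the claimed one. A direct case analysis (treating $q$ integer and non-integer) verifies that the partial sums $\sum_{j\le i}c_j$ coincide precisely with $\min(i/q,1)$ for every $i$, and then Abel summation delivers
\[\sum_i(m_i-c_i)\beta_i=\sum_{i=1}^{n-1}T_i(\beta_{i+1}-\beta_i),\qquad T_i=\min\Big(\frac{i}{q},1\Big)-\sum_{j\le i}m_j.\]
The choice of $\delta$ above forces $T_i\ge\epsilon_0$ for every $i\in\{1,\ldots,n-1\}$, hence $\sum(m_i-c_i)\beta_i\ge\epsilon_0(\beta_n-\beta_1)$; taking $\delta_0=\delta$ finishes the argument.

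The main obstacle is the last bookkeeping step: recognising the correct target coefficients whose partial sums reproduce exactly $\min(i/q,1)$, the very quantity appearing in \eqref{sMassIneq}, so that the subspace mass inequality in its two regimes ($i<q$ and $i\ge q$) simultaneously yields a single positive $\epsilon_0$. Everything else---the convergent subsequence of axes, the pointwise lower bound on $h_{E_k}$ on each piece, and the measure-continuity step---is routine once the right partition is in place.
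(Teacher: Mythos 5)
Your proposal is correct and follows essentially the same strategy as the paper: pass to a subsequence along which the principal axes of $E_k$ converge, build a $\delta$-partition of $\uS$ adapted to the limiting axes, use the subspace mass inequality plus continuity of $\mu$ from above to choose a uniform $\delta$ and $\epsilon_0$, bound $h_{E_k}\ge(\delta/2)b_{ik}$ on the $i$-th cell, and then rearrange the resulting weighted geometric mean. The one presentational difference is your final bookkeeping step: you identify target coefficients $c_i$ whose partial sums are exactly $\min(i/q,1)$ and conclude by a single Abel summation, whereas the paper runs a telescoping product \eqref{eq:77} followed by a three-way case split on the location of $\lceil q\rceil$ (cases \eqref{eq:78}, \eqref{eq:79}, \eqref{eq:80}); your version also works directly with the monotone cumulative sets $\bigcup_{j\le i}\sigma_j^\delta$ rather than sandwiching each $\Omega_{i\delta}$ between an increasing and a decreasing family as the paper does. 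Both routes are equivalent, and yours is arguably a bit tidier, but the underlying ideas coincide.
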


\begin{proof}{}
For each ellipsoid $E_k$, there exists an orthogonal matrix $P_k$ such that
\begin{equation}\label{eq:8}
  E_k=\set{y\in\R^n : |B_k^{-1}P_ky|\leq1},
\end{equation}  
where $B_k=\DIAG(b_{1k},\cdots,b_{nk})$.
Without loss of generality, we can assume that $P_k$ tends to some orthogonal
matrix $P$ as $k\to+\infty$.

For simplicity, write
$P=(\eta_1,\cdots,\eta_n)^T$ and
the identity matrix $I_n=(e_1,\cdots,e_n)^T$.
For each $\delta\in(0,\frac{1}{\sqrt{n}})$, we define
\begin{equation*}
  \Omega_{n\delta}=\set{x\in\uS : |\eta_n^Tx|>\delta}, 
\end{equation*}
and for $i=n-1,\cdots,1$, define
\begin{equation*}
  \Omega_{i\delta}=\set{x\in\uS : |\eta_n^Tx|\leq\delta, \cdots, |\eta_{i+1}^Tx|\leq\delta, |\eta_i^Tx|>\delta}.
\end{equation*}
Obviously, they are mutually disjoint subsets of $\uS$.
Moreover,
\begin{equation*}
  \uS\setminus\left( \cup_{i=1}^n \Omega_{i\delta} \right)
  = \set{x\in\uS : |\eta_n^Tx|\leq\delta, \cdots, |\eta_{1}^Tx|\leq\delta}
\end{equation*}
is an empty set due to $0<\delta<\frac{1}{\sqrt{n}}$.
Therefore, $\set{\Omega_{1\delta},\cdots,\Omega_{n\delta}}$ is a partition of $\uS$.

To determine the limits of $\Omega_{i\delta}$ as $\delta\to0^+$, we construct
\begin{align*}
  \Omega_{i\delta}'&=\set{x\in\uS : |\eta_n^Tx|=0, \cdots, |\eta_{i+1}^Tx|=0, |\eta_i^Tx|>\delta}, \\
  \Omega_{i\delta}''&=\set{x\in\uS : |\eta_n^Tx|\leq\delta, \cdots, |\eta_{i+1}^Tx|\leq\delta, |\eta_i^Tx|\neq0},
\end{align*}
satisfying
\begin{equation}\label{eq:2}
  \Omega_{i\delta}'\subset\Omega_{i\delta}\subset\Omega_{i\delta}''.
\end{equation}
Observe that, as $\delta\searrow0^+$, $\Omega_{i\delta}'$ is increasing
and $\Omega_{i\delta}''$ is decreasing.
Both of them have the same limit
\begin{equation*}
  \set{x\in\uS : |\eta_n^Tx|=0, \cdots, |\eta_{i+1}^Tx|=0, |\eta_i^Tx|\neq0},
\end{equation*}
which can be written as $\uS\cap (\xi_i\setminus\xi_{i-1})$,
if we define 
\begin{equation*}
  \xi_0=\set{0}
  \ \text{ and }\ 
  \xi_i=\SPAN\set{\eta_1,\cdots,\eta_i}
  \ \text{ for }\ 
  i=1,\cdots,n.
\end{equation*}
Now recalling \eqref{eq:2}, we obtain that
\begin{equation*}
  \lim_{\delta\to0^+} \Omega_{i\delta}
  =\uS\cap (\xi_i\setminus\xi_{i-1}),
\end{equation*}
implying
\begin{equation*}
  \lim_{\delta\to0^+} \mu(\Omega_{i\delta})
  =\mu(\uS\cap (\xi_i\setminus\xi_{i-1}))
  =\mu(\uS\cap \xi_i) -\mu(\uS\cap \xi_{i-1}).
\end{equation*}
Hence, for each $i=n,\cdots,2$, we have
\begin{equation}\label{eq:5}
  \lim_{\delta\to0^+}
  (\mu(\Omega_{n\delta}) +\cdots +\mu(\Omega_{i\delta}))
  =\mu(\uS)-\mu(\uS\cap \xi_{i-1}).
\end{equation}
Then
\begin{equation*}
  \begin{split}
    \lim_{\delta\to0^+}
    \frac{\mu(\Omega_{n\delta}) +\cdots +\mu(\Omega_{i\delta})}{\mu(\uS)}
    &=1-\frac{\mu(\uS\cap \xi_{i-1})}{\mu(\uS)} \\
    &>1-\min\set{\frac{i-1}{q},1} \\
    &=\max\set{\frac{q+1-i}{q},0},
  \end{split}
\end{equation*}
where the $q$-th subspace mass inequality \eqref{sMassIneq} has been used.
Thus, there exist two small positive numbers $\epsilon_0$ and $\delta_0$, such that
\begin{equation}\label{eq:6}
  \frac{\mu(\Omega_{n\delta_0}) +\cdots +\mu(\Omega_{i\delta_0})}{\mu(\uS)}
  >\epsilon_0+ \max\set{\frac{q+1-i}{q},0},
  \quad i=2,\cdots,n.
\end{equation}

Recalling $P_k\to P$ as $k\to+\infty$, one can find a large $k_0$, such that
\begin{equation*}
  \norm{P_k-P}<\frac{\delta_0}{2},
  \quad \forall\,k\geq k_0.
\end{equation*}
We now estimate $h_{E_k}$ in $\Omega_{i\delta_0}$ for every $i=1,\cdots,n$.
Here and in the following proof, we always assume that $k\geq k_0$.
Recalling \eqref{eq:8}, since
\begin{equation*}
  |B_k^{-1}P_kb_{ik}P_k^Te_i|=|e_i|=1,
\end{equation*}
we see $\pm b_{ik}P_k^Te_i\in E_k$.
By the definition of support function, we have
\begin{equation}\label{eq:72}
  h_{E_k}(x) \geq b_{ik} |e_i^TP_kx|.
\end{equation}
Recalling
$|\eta_i^Tx|>\delta_0$ for $x\in\Omega_{i\delta_0}$, there is
\begin{equation*}
  \begin{split}
    |e_i^TP_kx|
    &\geq |e_i^TPx|-|e_i^T(P_k-P)x| \\
    &\geq |\eta_i^Tx|-\norm{P_k-P} \\
    &>\frac{\delta_0}{2},
  \end{split}
\end{equation*}
which together with \eqref{eq:72} yields
\begin{equation} \label{eq:73}
  h_{E_k}(x) > \frac{\delta_0}{2} b_{ik},
  \quad \forall\,x\in\Omega_{i\delta_0}.
\end{equation}
Recalling that 
$\set{\Omega_{1\delta_0},\cdots,\Omega_{n\delta_0}}$ is a partition of $\uS$,
we have
\begin{equation} \label{eq:74}
  \begin{split}
    \Lambda_k:=
    \frac{1}{|\mu|} \int_{\uS} \log h_{E_k}(x) \dd\mu(x) 
    &=
    \frac{1}{|\mu|} \sum_{i=1}^n \int_{\Omega_{i\delta_0}} \log h_{E_k}(x) \dd\mu(x) \\
    &\geq \frac{1}{|\mu|} \sum_{i=1}^n
    \mu(\Omega_{i\delta_0})
    \log\left(\frac{\delta_0}{2} b_{ik} \right).
  \end{split}
\end{equation}
Now for simplicity, denote
\begin{equation*}
  m_i:= \frac{\mu(\Omega_{i\delta_0})}{|\mu|},
  \quad i=1,\cdots,n.
\end{equation*}
Then, \eqref{eq:74} becomes
\begin{equation}\label{eq:75} 
  \Lambda_k
  \geq \sum_{i=1}^n m_i \log\left(\frac{\delta_0}{2} b_{ik} \right)
  = \log \left(
    \frac{\delta_0}{2}
    b_{nk}^{m_n}\cdots b_{1k}^{m_1}
  \right),
\end{equation}
where the fact $m_n+\cdots+m_1=1$ has been used.

Recall the estimate \eqref{eq:6}, which says
\begin{equation}\label{eq:7}
  m_n+\cdots+m_i >\epsilon_0+\tau_i,
  \quad i=2,\cdots,n,
\end{equation}
where
\begin{equation}\label{eq:76}
  \tau_i
  = \max\set{\frac{q+1-i}{q},0}
  =\begin{cases}
    (q+1-i)/q, & \text{when } 2\leq i\leq \lceil q \rceil, \\
    0, & \text{when } \lceil q \rceil+1\leq i\leq n.
  \end{cases}
\end{equation}
On account of \eqref{eq:7} and $0<b_{1k}\leq\cdots\leq b_{nk}$, we have the
following computations:
\begin{equation}\label{eq:77}
  \begin{split}
    b_{nk}^{m_n}\cdots b_{1k}^{m_1}
    &=
    \left(\frac{b_{nk}}{b_{n-1;k}}\right)^{m_n}
    \cdots
    \left(\frac{b_{ik}}{b_{i-1;k}}\right)^{m_n+\cdots+m_i}
    \cdots
    \left(\frac{b_{2k}}{b_{1k}}\right)^{m_n+\cdots+m_2}
    b_{1k}^{m_n+\cdots+m_1} \\
    &\geq
    \left(\frac{b_{nk}}{b_{n-1;k}}\right)^{\epsilon_0+\tau_n}
    \cdots
    \left(\frac{b_{ik}}{b_{i-1;k}}\right)^{\epsilon_0+\tau_i}
    \cdots
    \left(\frac{b_{2k}}{b_{1k}}\right)^{\epsilon_0+\tau_2}
    b_{1k} \\
    &=
    b_{nk}^{\epsilon_0+\tau_n}
    b_{n-1;k}^{\tau_{n-1}-\tau_n}
    \cdots
    b_{2k}^{\tau_2-\tau_3}
    b_{1k}^{1-\epsilon_0-\tau_2}.
  \end{split}
\end{equation}
When $q\in(0,1]$, we have $\lceil q \rceil=1$.
From \eqref{eq:76}, there is $\tau_i=0$ for $i=2,\cdots,n$.
Hence, the estimate \eqref{eq:77} is simplified as
\begin{equation} \label{eq:78}
  b_{nk}^{m_n}\cdots b_{1k}^{m_1}
  \geq
  b_{nk}^{\epsilon_0}
  b_{1k}^{1-\epsilon_0}.
\end{equation}
When $q\in(n-1,n)$, we have $\lceil q \rceil=n$.
From \eqref{eq:76}, there is $\tau_i=(q+1-i)/q$ for $i=2,\cdots,n$,
implying $\tau_i-\tau_{i+1}=1/q$ for $i=2,\cdots,n-1$.
Therefore, \eqref{eq:77} is simplified as
\begin{equation}\label{eq:79}
  b_{nk}^{m_n}\cdots b_{1k}^{m_1}
  \geq
  b_{nk}^{\epsilon_0+(q+1-n)/q}
  b_{1k}^{-\epsilon_0+1/q}
  \prod_{i=2}^{n-1} b_{ik}^{1/q}.
\end{equation}
When $q\in(1,n-1]$, we have $2\leq\lceil q \rceil\leq n-1$.
By \eqref{eq:76}, one can see that
\begin{equation*}
  \tau_i-\tau_{i+1}=
  \begin{cases}
    1/q, & \text{when } 2\leq i\leq \lceil q \rceil-1, \\
    (q+1-\lceil q \rceil)/q, & \text{when }  i= \lceil q \rceil, \\
    0, & \text{when } \lceil q \rceil+1\leq i\leq n-1,
  \end{cases}
\end{equation*}
which together with $\tau_2=\frac{q-1}{q}$ and $\tau_n=0$
implies that \eqref{eq:77} reads
\begin{equation}\label{eq:80}
  \begin{split}
    b_{nk}^{m_n}\cdots b_{1k}^{m_1}
    &\geq
    b_{nk}^{\epsilon_0+\tau_n}
    b_{1k}^{1-\epsilon_0-\tau_2}
    \prod_{i=2}^{\lceil q \rceil} b_{ik}^{\tau_i-\tau_{i+1}} \\
    &=
    b_{nk}^{\epsilon_0}
    b_{1k}^{-\epsilon_0+1/q}
    b_{\lceil q \rceil k}^{(q-\lceil q \rceil)/q}
    \prod_{i=2}^{\lceil q \rceil} b_{ik}^{1/q}.
  \end{split}
\end{equation}
Note that \eqref{eq:78}, \eqref{eq:79} and \eqref{eq:80} can be unified into the
following form: 
\begin{equation} \label{eq:81}
  b_{nk}^{m_n}\cdots b_{1k}^{m_1}
  \geq
  b_{nk}^{\epsilon_0}
  b_{1k}^{-\epsilon_0}
  b_{\lceil q \rceil k}^{(q-\lceil q \rceil)/q}
  \prod_{i=1}^{\lceil q \rceil} b_{ik}^{1/q}
  \quad \text{when }\  q\in(0,n).
\end{equation}

Now, inserting \eqref{eq:81} into \eqref{eq:75},
we obtain the conclusion \eqref{eq:1}. 
\end{proof}

\subsection{A sharp estimate about dual quermassintegrals}

Based on Lemma \ref{lemInt}, we can easily have a sharp estimate about $\VTq$
for origin-centered ellipsoids.

\begin{lemma}\label{lem314}
Let $Q$ be a star body in $\R^n$.
For any origin-centered ellipsoid $E\subset\R^n$  
with lengths of the semi-axes $b_{1}\leq\cdots\leq b_{n}$, 
we have 
\begin{equation} \label{eq:89}
  \VTq(E,Q)\approx 
  \begin{cases}
    b_1\cdots b_{n}b_n^{q-n},
    & \text{when } q\in[n,\infty), \\
    b_1\cdots b_{\lceil q \rceil}b_{\lceil q \rceil}^{q-{\lceil q \rceil}},
    & \text{when non-integer } q\in(0,n), \\
    b_1\cdots b_{q} (1+\log(b_{q+1}/b_{q})),
    & \text{when integer } q\in(0,n),
  \end{cases}
\end{equation}
where the ``$\approx$'' means the ratio of the two sides has positive upper and lower
bounds depending only on $n$, $q$, $\min\rho_Q$ and $\max\rho_Q$.
\end{lemma}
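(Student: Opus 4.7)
The plan is to reduce the estimate of $\VTq(E,Q)$ directly to the integral estimate of Lemma \ref{lemInt}. First, observe that any origin-centered ellipsoid $E\subset\R^n$ with semi-axes $b_1\le\cdots\le b_n$ can, after an orthogonal rotation of $\uS$ (which preserves both the spherical measure and $\rho_Q$ up to the uniform bounds on $\rho_Q$), be written as $E=\{y\in\R^n:|A^{-1}y|\le 1\}$ where $A=\DIAG(b_1,\ldots,b_n)$. From this description one reads off the radial function $\rho_E(u)=1/|A^{-1}u|$, so by the definition \eqref{dmV},
\begin{equation*}
  \VTq(E,Q) \;=\; \frac{1}{n}\int_{\uS}\frac{\rho_Q^{n-q}(u)}{|A^{-1}u|^{q}}\,\dd u.
\end{equation*}

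Since $Q$ is a star body, $\rho_Q$ has positive upper and lower bounds $\max\rho_Q$ and $\min\rho_Q$, hence $\rho_Q^{n-q}(u)$ is pinched between two constants depending only on $n$, $q$, $\min\rho_Q$, $\max\rho_Q$. Pulling this factor out yields
\begin{equation*}
  \VTq(E,Q) \;\approx\; \int_{\uS}\frac{\dd u}{|A^{-1}u|^{q}}.
\end{equation*}
Now $A^{-1}=\DIAG(b_1^{-1},\ldots,b_n^{-1})$, whose diagonal entries satisfy $b_1^{-1}\ge\cdots\ge b_n^{-1}>0$, so it fits the normalization of Lemma \ref{lemInt} with $s_i=b_i^{-1}$ and $\alpha=q$.

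The remaining step is a direct substitution into the three cases of \eqref{int}. When $q\ge n$, the right-hand side becomes $1/(s_1\cdots s_n s_n^{q-n})=b_1\cdots b_n\, b_n^{q-n}$. When $q\in(0,n)$ is non-integer, it becomes $1/(s_1\cdots s_{\lceil q\rceil}s_{\lceil q\rceil}^{q-\lceil q\rceil})=b_1\cdots b_{\lceil q\rceil}b_{\lceil q\rceil}^{q-\lceil q\rceil}$. When $q\in(0,n)$ is an integer, we use $s_q/s_{q+1}=b_{q+1}/b_q$ to get $(1+\log(b_{q+1}/b_q))b_1\cdots b_q$. These three identifications reproduce the three branches of \eqref{eq:89}.

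There is no real obstacle; the only thing to watch out for is bookkeeping, namely that inverting the diagonal matrix reverses the order of the entries, so the ``top $\lceil q\rceil$ largest'' entries of $A^{-1}$ correspond to the smallest $b_i$, which is exactly what appears in the claimed formulas. Once this indexing is checked, the lemma follows immediately from Lemma \ref{lemInt}.
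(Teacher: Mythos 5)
Your proposal is correct and follows essentially the same route as the paper: express $\rho_E$ in terms of a diagonal matrix via an orthogonal conjugation, pull out the pinched factor $\rho_Q^{n-q}$, and substitute into Lemma \ref{lemInt} with $s_i=b_i^{-1}$. The only cosmetic difference is that the paper pulls out $\rho_Q$ before invoking rotation invariance of the remaining integral, whereas you rotate first and then observe that $\rho_Q$ stays pinched between the same bounds; both orderings are valid.
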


\begin{proof}{}
Note that $E$ can be expressed as   
\begin{equation*}
  E=\set{y\in\R^n : |APy|\leq1},
\end{equation*}  
where $P$ is some orthogonal matrix of order $n$, and
\begin{equation*}
  A=\DIAG(b_{1}^{-1},\cdots,b_{n}^{-1}).
\end{equation*}  
Then, $\rho_E(u)=\frac{1}{|APu|}$ for $u\in\uS$.
Thus, we have for $q>0$ that
\begin{equation*}
  \begin{split}
    \widetilde{V}_q(E,Q)
    &= \frac{1}{n}\int_{\uS}\rho_E^q(u)\rho_Q^{n-q}(u)\dd u \\
    &\approx \int_{\uS}\rho_E^q(u)\dd u \\
    &= \int_{\uS} \frac{\dd u}{|APu|^q} \\ 
    &= \int_{\uS} \frac{\dd u}{|Au|^q}.
  \end{split}
\end{equation*}
Now the conclusion \eqref{eq:89} follows directly from Lemma \ref{lemInt}. 
\end{proof}

\subsection{Existence of solutions to the minimizing problem}

We are now in position to prove that the minimizing problem \eqref{minP} has a
solution.

First note that $J$ defined in \eqref{J} is homogeneous of degree zero.
In fact, by \eqref{eq:82}, for $g\in C^+(\uS)$ and $\lambda>0$, there is
\begin{equation*}
  \VTq(K_{\lambda g},Q)
  = \VTq(\lambda K_{g},Q)
  = \lambda^q \VTq(K_{g},Q),
\end{equation*}
which leads to 
\begin{equation}\label{eq:4}
  J[\lambda g]=J[g].
\end{equation}

\begin{lemma}\label{lem300}
Under the assumptions of Lemma \ref{lemq0n}, the minimizing problem \eqref{minP}
has a solution $h$.
In addition, the solution $h$ is the support function of $K_h$. 
\end{lemma}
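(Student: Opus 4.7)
The plan is to apply the direct method of the calculus of variations. First I would take a minimizing sequence $\{g_j\}\subset C_e^+(\uS)$ with $J[g_j]\to\inf J$; note $\inf J<\infty$ since $J$ evaluated at the constant function $1$ gives $-\tfrac{1}{q}\log\VTq(B^n,Q)$, which is finite. Because $h_{K_g}\leq g$ pointwise while $K_{h_{K_g}}=K_g$, replacing each $g_j$ by $h_{K_{g_j}}$ decreases the first term of $J$ and preserves the second, so I may assume $g_j=h_{K_j}$ for some $K_j\in\K_e^n$. Using the homogeneity \eqref{eq:4}, I rescale each $K_j$ so that the largest semi-axis $b_{nj}$ of its John (maximum-volume) ellipsoid $E_j$ equals $1$; then $K_j\subset\sqrt{n}\,E_j\subset\sqrt{n}\,B^n$, giving a uniform diameter bound.

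The hard part is to show $b_{1j}$ stays bounded away from $0$. Passing to a subsequence so that the orthogonal matrices diagonalizing $E_j$ converge, I apply Lemma \ref{lemEnt} to $\{E_j\}$; since $E_j\subset K_j$ gives $h_{K_j}\geq h_{E_j}$, this lower-bounds the first term of $J[h_{K_j}]$. For the second term, $K_j\subset\sqrt{n}\,E_j$ together with the monotonicity \eqref{eq:88} and Lemma \ref{lem314} upper-bounds $\VTq(K_j,Q)$. A direct comparison of the exponents of $b_{ij}$ appearing in the two estimates shows that the contributions from $b_{1j},\ldots,b_{\lceil q\rceil,j}$ cancel exactly, leaving
\[
  J[h_{K_j}]\ \geq\ \epsilon_0\log\frac{b_{nj}}{b_{1j}}\ -\ C,
\]
where $C$ depends only on $n,q,\mu,\min\rho_Q,\max\rho_Q$. (For integer $q\in(0,n)$ the extra factor $1+\log(b_{q+1,j}/b_{q,j})$ in Lemma \ref{lem314} contributes only an $O(\log\log(b_{nj}/b_{1j}))$ correction, absorbed into the linear term.) Since $\{J[h_{K_j}]\}$ is uniformly bounded above and $b_{nj}=1$, this forces $b_{1j}\geq c>0$ for some $c$ independent of $j$; the strict subspace mass inequality \eqref{sMassIneq} is precisely what ensures $\epsilon_0>0$ in Lemma \ref{lemEnt}.

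All semi-axes of $E_j$ then lie in $[c,1]$, so $cB^n\subset K_j\subset\sqrt{n}\,B^n$. By the Blaschke selection theorem, a subsequence $K_{j'}$ converges in the Hausdorff metric to a limit $K_\infty\in\K_e^n$ containing $cB^n$. Set $h:=h_{K_\infty}\in C_e^+(\uS)$. Uniform convergence $h_{K_{j'}}\to h$ on $\uS$, combined with the uniform positive two-sided bounds on $h_{K_{j'}}$, yields $\int_{\uS}\log h_{K_{j'}}\,\dd\mu\to\int_{\uS}\log h\,\dd\mu$; similarly $\rho_{K_{j'}}\to\rho_{K_\infty}$ uniformly, and \eqref{dmV} gives $\VTq(K_{j'},Q)\to\VTq(K_\infty,Q)$. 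Therefore $J[h]=\lim J[h_{K_{j'}}]=\inf J$, so $h$ minimizes \eqref{minP}. Since $h$ is itself a support function, $K_h=K_\infty$ and hence $h=h_{K_h}$, as required.
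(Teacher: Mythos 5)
Your proposal is correct and follows essentially the same route as the paper: replace the minimizing sequence by support functions, invoke the John ellipsoid, use Lemma \ref{lemEnt} for a lower bound on the entropy term and Lemma \ref{lem314} with monotonicity \eqref{eq:88} for an upper bound on $\VTq$, deduce $b_{n}/b_{1}$ is bounded, then apply Blaschke selection. The only (inessential) difference is the normalization: the paper scales so that $\max_{\uS}h_k=\sqrt n$ and then deduces $1\leq b_{nk}\leq\sqrt n$, while you scale so that $b_{nj}=1$ directly; these are equivalent.
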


\begin{proof}{}
Assume $\set{g_k}\subset C_e^+(\uS)$ is a minimizing sequence of \eqref{minP}, namely
\begin{equation}\label{eq:83}
  J[g_k]\to
  \inf\set{ J[g] : g\in C_e^+(\uS)}
  \quad \text{as } k\to+\infty.
\end{equation}  
Since $g_k$ is even, the Alexandrov body $K_{g_k}$ is origin-symmetric.
Let $h_k$ be the support function of $K_{g_k}$.
Then $h_k\in C_e^+(\uS)$, $h_k\leq g_k$, and $K_{h_k}=K_{g_k}$.
Therefore,
\begin{equation*}
  \begin{split}
    J[h_k] &=
    \frac{1}{|\mu|} \int_{\uS} \log h_k \dd\mu
    -\frac{1}{q}\log\widetilde{V}_q(K_{h_k},Q) \\
    &\leq
    \frac{1}{|\mu|} \int_{\uS} \log g_k \dd\mu
    -\frac{1}{q}\log\widetilde{V}_q(K_{g_k},Q) \\
    &= J[g_k],
  \end{split} 
\end{equation*}
which together with \eqref{eq:83} implies that
\begin{equation}\label{eq:84}
  J[h_k]\to
  \inf\set{ J[g] : g\in C_e^+(\uS)}
  \quad \text{as } k\to+\infty.
\end{equation}  
Namely, $\set{h_k}$ is also a minimizing sequence of \eqref{minP}.
Recalling the zeroth-order homogeneity of $J$ given in \eqref{eq:4},
we can assume that $\max_{\uS}h_k=\sqrt{n}$ for every $k$.

Let $E_k$ be the maximum-volume ellipsoid of $K_{h_k}$.
Then $E_k$ is origin-centered and satisfies 
\begin{equation}\label{eq:3}
  E_k \subset K_{h_k} \subset \sqrt{n} E_k,
\end{equation}
implying that
\begin{equation} \label{eq:87}
  h_{E_k} \leq h_k \leq \sqrt{n}h_{E_k}. 
\end{equation}
Let $b_{1k}\leq\cdots\leq b_{nk}$ be the lengths of the semi-axes of $E_k$.
From \eqref{eq:87}, we have
\begin{equation*}
  \max h_{E_k}
  \leq
  \max h_k
  \leq
  \sqrt{n} \max h_{E_k},
\end{equation*}
which together with $\max h_{E_k}=b_{nk}$ and $\max h_k=\sqrt{n}$ implies that
\begin{equation} \label{eq:86}
  1\leq b_{nk}\leq \sqrt{n}
  \quad\text{for every }k.
\end{equation}

Recall $\VTq(\cdot,Q)$ is increasing when $q\in(0,n)$; see \eqref{eq:88}.
By virtue of \eqref{eq:87} and \eqref{eq:3}, we have
\begin{equation*}
  \begin{split}
    J[h_k]
    &=
    \frac{1}{|\mu|} \int_{\uS} \log h_k \dd\mu
    -\frac{1}{q}\log\widetilde{V}_q(K_{h_k},Q) \\
    &\geq
    \frac{1}{|\mu|} \int_{\uS} \log h_{E_k} \dd\mu
    -\frac{1}{q}\log\widetilde{V}_q(\sqrt{n}E_k,Q) \\
    &=
    \frac{1}{|\mu|} \int_{\uS} \log h_{E_k} \dd\mu
    -\frac{1}{q}\log\widetilde{V}_q(E_k,Q)
    -\log\sqrt{n},
  \end{split} 
\end{equation*}
where the last equality is due to \eqref{eq:82}.
Applying Lemma \ref{lemEnt} to the sequence $\set{E_k}$, 
there exists a subsequence $\set{E_{k'}}$,
two positive numbers $\epsilon_0$, $\delta_0$, and an integer $k_0$,
such that for any $k'\geq k_0$, the estimate \eqref{eq:1} holds.
Therefore,
\begin{equation} \label{eq:85}
  \begin{split}
    J[h_{k'}]
    &\geq
    \log\left(
      b_{nk'}^{\epsilon_0}
      b_{1k'}^{-\epsilon_0}
      b_{\lceil q \rceil k'}^{(q-\lceil q \rceil)/q}
      \prod_{i=1}^{\lceil q \rceil} b_{ik'}^{1/q}
    \right)
    -\frac{1}{q}\log\widetilde{V}_q(E_{k'},Q)
    +\log\left(\frac{\delta_0}{2\sqrt{n}} \right) \\
    &=
    \frac{1}{q} \log\left( \frac{
        b_{nk'}^{q\epsilon_0}
        b_{1k'}^{-q\epsilon_0}
        b_{\lceil q \rceil k'}^{q-\lceil q \rceil}
        \prod_{i=1}^{\lceil q \rceil} b_{ik'}
      }{\VTq(E_{k'},Q)} \right)
    +\log\left(\frac{\delta_0}{2\sqrt{n}} \right).
  \end{split} 
\end{equation}

Note that $\VTq(E_{k'},Q)$ can be estimated by Lemma \ref{lem314}.
When $q\in(0,n)$ is a non-integer,
\begin{equation*}
  \VTq(E_{k'},Q)
  \approx
  b_{1k'}\cdots b_{\lceil q \rceil k'}b_{\lceil q \rceil k'}^{q-{\lceil q \rceil}},
\end{equation*}
implying that \eqref{eq:85} can be reduced to
\begin{equation} \label{eq:90}
  J[h_{k'}] \geq
  \epsilon_0 \log\left(b_{nk'}/b_{1k'} \right)
  -C_0,
\end{equation}
where $C_0$ is a positive constant depending only on
$n$, $q$, $\min\rho_Q$, $\max\rho_Q$ and $\delta_0$.
When $q\in(0,n)$ is an integer, there is
\begin{equation*}
  \VTq(E_{k'},Q)
  \approx
  b_{1k'}\cdots b_{qk'} (1+\log(b_{q+1;k'}/b_{qk'})),
\end{equation*}
which together with $\lceil q \rceil=q$ implies
that \eqref{eq:85} can be reduced to
\begin{equation} \label{eq:91}
  \begin{split}
    J[h_{k'}]
    &\geq
    \frac{1}{q} \log\left( \frac{
        b_{nk'}^{q\epsilon_0}
        b_{1k'}^{-q\epsilon_0}
      }{1+\log(b_{q+1;k'}/b_{qk'})} \right)
    -C_0 \\
    &\geq
    \frac{1}{q} \log\left( \frac{
        b_{nk'}^{q\epsilon_0}
        b_{1k'}^{-q\epsilon_0}
      }{1+\log(b_{nk'}/b_{1k'})} \right)
    -C_0,
  \end{split} 
\end{equation}
where $C_0$ is again a positive constant independent of $k'$.

Recalling \eqref{eq:84}, which says as $k'\to+\infty$ that
\begin{equation*}
  J[h_{k'}]\to
  \inf\set{ J[g] : g\in C_e^+(\uS)}
  \leq J[1],
\end{equation*}  
where $J[1]= -\frac{1}{q}\log\widetilde{V}_{n-q}(Q)$ is a finite number.
Without loss of generality, we assume 
\begin{equation*}
  J[h_{k'}]<1+J[1], 
  \quad \forall\,k'\geq k_0.
\end{equation*}
Combining it with \eqref{eq:90} and \eqref{eq:91},
we obtain for each $q\in(0,n)$ that
\begin{equation*}
  \frac{b_{nk'}}{b_{1k'}}\leq C,
  \quad \forall\,k',
\end{equation*}
where $C$ is a positive constant independent of $k'$.
Now, by \eqref{eq:86}, we have
\begin{equation*}
  b_{1k'} \geq \frac{1}{C},
  \quad \forall\,k',
\end{equation*}
which together with \eqref{eq:87} implies
\begin{equation*} 
  \min_{\uS} h_{k'} \geq b_{1k'} \geq \frac{1}{C},
  \quad \forall\,k'.
\end{equation*}
Note that $h_{k'}$ is the support function of $K_{h_{k'}}$, 
and $\max_{\uS}h_{k'}=\sqrt{n}$.
Applying Blaschke selection theorem to $\set{h_{k'}}$,
there is a subsequence, still denoted by $\set{h_{k'}}$,
which uniformly converges to some support function $h$ on $\uS$.
Obviously, $\frac{1}{C}\leq h\leq \sqrt{n}$ on $\uS$,
namely, $h\in C_e^+(\uS)$.
Correspondingly, $K_{h_{k'}}$ converges to $K_h\in \mathcal{K}_e^n$
which is the convex body determined by $h$.
Recalling the definition of $J$ in \eqref{J},
there is $\lim_{k'\to+\infty}J[h_{k'}]=J[h]$.
By \eqref{eq:84} again, we have
\begin{equation*}
  J[h]=
  \inf\set{ J[g] : g\in C_e^+(\uS)}.
\end{equation*}  
Thus, $h$ is a solution to the minimizing problem \eqref{minP}.
The proof of this lemma is now completed.
\end{proof}

\subsection{Existence of solutions to the generalized dual Minkowski problem}

By virtue of the variational formula Lemma \ref{lemVari}, one can prove the
following lemma.

\begin{lemma}\label{lem317}
A multiple of the minimizer $h$ obtained in Lemma \ref{lem300}
solves Eq. \eqref{eq:113}.
\end{lemma}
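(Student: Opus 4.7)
The plan is to derive an Euler--Lagrange equation for the minimizer $h$ of Lemma \ref{lem300} by taking first-variation along even perturbations, apply the variational formula of Lemma \ref{lemVari}, and then correct for a multiplicative factor by the homogeneity property \eqref{eq:82}.

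First I would fix an arbitrary even continuous function $\varphi$ on $\uS$ and set $g_t := h + t\varphi$ for $t$ in a small interval $(-\epsilon,\epsilon)$. Since $h \in C_e^+(\uS)$ is bounded away from zero (as established in the proof of Lemma \ref{lem300}), $g_t \in C_e^+(\uS)$ for all sufficiently small $|t|$, and $(g_t - g_0)/t \to \varphi$ uniformly on $\uS$. The minimality condition $\frac{d}{dt}J[g_t]\big|_{t=0}=0$, combined with Lemma \ref{lemVari} applied to the functional $J$ defined in \eqref{J}, will give
\begin{equation*}
  \frac{1}{|\mu|}\int_{\uS}\frac{\varphi}{h}\,\dd\mu
  \;=\;
  \frac{1}{\widetilde{V}_q(K_h,Q)}\int_{\uS}\frac{\varphi}{h_{K_h}}\,\dd\widetilde{C}_q(K_h,Q,\cdot).
\end{equation*}
Here I would invoke the final conclusion of Lemma \ref{lem300}, namely that $h$ is the support function of $K_h$, so that $h = h_{K_h}$ and the factor $h^{-1}$ appearing on both sides can be absorbed into the test function.

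Since $\varphi \in C_e(\uS)$ is arbitrary and $h \in C_e^+(\uS)$, the function $\varphi/h$ ranges over all of $C_e(\uS)$, so the identity above forces the two measures
\begin{equation*}
  \frac{1}{|\mu|}\,\mu
  \quad\text{and}\quad
  \frac{1}{\widetilde{V}_q(K_h,Q)}\,\widetilde{C}_q(K_h,Q,\cdot)
\end{equation*}
to agree when integrated against any even continuous test function. As $\mu$ is even by hypothesis and $K_h$, $Q$ are origin-symmetric so that $\widetilde{C}_q(K_h,Q,\cdot)$ is even as well, testing against even functions already determines both measures, so they are equal. Consequently $\widetilde{C}_q(K_h,Q,\cdot)=\bigl(\widetilde{V}_q(K_h,Q)/|\mu|\bigr)\mu$.

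Finally, the homogeneity \eqref{eq:82} lets me correct the constant: setting $K := \lambda K_h$ with $\lambda = \bigl(|\mu|/\widetilde{V}_q(K_h,Q)\bigr)^{1/q}$ yields $\widetilde{C}_q(K,Q,\cdot) = \lambda^q\widetilde{C}_q(K_h,Q,\cdot) = \mu$, and $K$ is origin-symmetric since $K_h$ is. I do not anticipate a serious obstacle here, as the structural ingredients (variational formula, evenness, homogeneity, and the fact that $h$ is a genuine support function) have all been prepared in advance; the one point that warrants care is the passage from identity on even test functions to identity of Borel measures, which is precisely where the even symmetry of $\mu$ and of $\widetilde{C}_q(K_h,Q,\cdot)$ is essential.
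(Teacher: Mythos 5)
Your proposal is correct and follows essentially the same route as the paper: perturb by an even test function, differentiate $J$ at $t=0$ using Lemma \ref{lemVari}, identify the two even measures by testing against even functions, and then rescale via the homogeneity \eqref{eq:82} with $\lambda=\bigl(|\mu|/\widetilde{V}_q(K_h,Q)\bigr)^{1/q}$. The paper's proof of Lemma \ref{lem317} proceeds identically, so there is nothing to add.
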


\begin{proof}
Let $h$ be the solution obtained in Lemma \ref{lem300}. 
For any given continuous even function $\varphi\in C(\uS)$, let
\begin{equation*}
  g_t=h+t\varphi \quad \text{for small } t\in\R.
\end{equation*}
Since $h\in C^+_{e}(\uS)$, for $t$ sufficiently small $g_t\in C^+_{e}(\uS)$ as well.
By Lemma \ref{lemVari}, we have
\begin{equation} \label{eq:92}
  \frac{\dd}{\dd t} \widetilde{V}_q(K_{g_t},Q) \Big|_{t=0}
  = q\int_{\uS} \varphi h^{-1} \dd\widetilde{C}_q(K_{h},Q).
\end{equation}

Write $J(t)=J[g_t]$.
Then $J(0)=J[h]$.
Since $h$ is a minimizer of \eqref{minP}, there is
\begin{equation*}
  J(t)\geq J(0)
  \quad \text{for any small } t\in\R,
\end{equation*}
which together with \eqref{eq:92} and the definition of $J$ in \eqref{J} yields that
\begin{equation*}
  \begin{split}
    0&= \frac{\dd}{\dd t} J(t) \Big|_{t=0} \\
    &= \frac{\dd}{\dd t} \left(
      \frac{1}{|\mu|} \int_{\uS} \log g_t\, \dd\mu 
      -\frac{1}{q}\log\widetilde{V}_q(K_{g_t},Q)
    \right) \bigg|_{t=0} \\
    &=
    \frac{1}{|\mu|} \int_{\uS} \varphi h^{-1} \dd\mu 
    -\frac{1}{\widetilde{V}_q(K_{h},Q)}
    \int_{\uS} \varphi h^{-1} \dd\widetilde{C}_q(K_{h},Q).
  \end{split}
\end{equation*}
Note that this equality holds for arbitrary even function $\varphi$,
and that $\mu$, $\CTq(K_h,Q)$ are even Borel measures.
Therefore, we obtain
\begin{equation*}
  \frac{1}{|\mu|} \mu 
  =\frac{1}{\widetilde{V}_q(K_{h},Q)}
  \widetilde{C}_q(K_{h},Q).
\end{equation*}
Letting
\begin{equation*}
  c=\left(
    \frac{|\mu|}{\widetilde{V}_q(K_{h},Q)}
  \right)^{1/q},
\end{equation*}
and recalling \eqref{eq:82}, we have
\begin{equation*}
  \widetilde{C}_q(c K_{h},Q)
  =\mu. 
\end{equation*}
The proof of this lemma is completed.
\end{proof}

Now the proof of Lemma \ref{lemq0n} is completed.

\section{The case $q<0$}
\label{sec5}

In this section, we prove Theorem \ref{thm4}.
For $q<0$, we can consider the same minimizing problem as used for the case
$0<q<n$ in the previous section. 
Since $\VTq$ is easy to estimate when $q<0$,
one can drop the evenness assumption.
Therefore, we consider the following minimizing problem:
\begin{equation}\label{minP1}
  \inf\set{ J[g] : g\in C^+(\uS)},
\end{equation} 
where
\begin{equation*} 
  J[g]=
  \frac{1}{|\mu|} \int_{\uS} \log g\, \dd\mu 
  -\frac{1}{q}\log\widetilde{V}_q(K_g,Q).
\end{equation*}

First, we have the following entropy-type integral estimate.

\begin{lemma}\label{lemEnt1}
Assume $\mu$ is a finite Borel measure on $\uS$ which is not
concentrated in any closed hemisphere of $\uS$. 
Then for any sequence of
positive support functions $\set{h_k}\subset C^+(\uS)$,
there exists a subsequence $\set{h_{k'}}$,
two small positive numbers $\epsilon_0,\delta_0\in(0,1)$,
and an integer $k_0$, such that for any $k'\geq k_0$,
\begin{equation} \label{eq:95}
  \frac{1}{|\mu|} \int_{\uS} \log h_{k'} \dd\mu
  \geq \log\left(
    \frac{\delta_0}{2} 
    (\max h_{k'})^{\epsilon_0}
    \cdot (\min h_{k'})^{1-\epsilon_0}
  \right).
\end{equation} 
\end{lemma}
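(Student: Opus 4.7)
The plan is to mimic the proof of Lemma \ref{lemEnt} but with a much coarser partition of $\uS$: just one spherical cap and its complement, rather than a nested flag of subspaces. This suffices because non-concentration on closed hemispheres only delivers positivity at one ``hemisphere scale,'' which matches the single-gap form $(\max h_{k'})^{\epsilon_0}(\min h_{k'})^{1-\epsilon_0}$ on the right-hand side of \eqref{eq:95}. Concretely, for each $k$ let $x_k\in\uS$ be a point at which $h_k$ attains its maximum. If $K_k$ is the convex body with support function $h_k$, then a quick check shows that $\xi_k:=(\max h_k)\,x_k$ is automatically a farthest point of $K_k$ from the origin, so $\xi_k\in K_k$, and consequently
\[
h_k(y)\geq\xi_k\cdot y=(\max h_k)(x_k\cdot y)\qquad\forall\,y\in\uS.
\]
By compactness of $\uS$ I extract a subsequence $\set{x_{k'}}$ converging to some $x_*\in\uS$.

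Since $\mu$ is not concentrated on the closed hemisphere $\set{y\in\uS:y\cdot x_*\leq 0}$, one has $\mu(\set{y\in\uS:y\cdot x_*>0})>0$. Continuity from below of $\mu$ lets me pick $\delta_0\in(0,1)$ so small that $A:=\set{y\in\uS:y\cdot x_*>2\delta_0}$ still has $\mu(A)>0$; set $\epsilon_0:=\mu(A)/|\mu|\in(0,1)$, and choose $k_0$ large enough that $|x_{k'}-x_*|<\delta_0$ for all $k'\geq k_0$. For such $k'$ the triangle inequality yields $A\subset\Omega_{k'}:=\set{y\in\uS:y\cdot x_{k'}>\delta_0}$, so $m:=\mu(\Omega_{k'})/|\mu|\geq\epsilon_0$.

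On $\Omega_{k'}$ the display above gives $h_{k'}(y)>\delta_0\max h_{k'}$, while on its complement I use only $h_{k'}\geq\min h_{k'}$. Splitting the integral accordingly and dividing by $|\mu|$ gives
\[
\frac{1}{|\mu|}\int_{\uS}\log h_{k'}\dd\mu\geq m\log\delta_0+m\log\max h_{k'}+(1-m)\log\min h_{k'}.
\]
Subtracting the target bound $\log(\delta_0/2)+\epsilon_0\log\max h_{k'}+(1-\epsilon_0)\log\min h_{k'}$, the difference equals $(m-1)\log\delta_0+(m-\epsilon_0)\log(\max h_{k'}/\min h_{k'})+\log 2$, each of whose three summands is nonnegative (from $m\leq 1$ together with $\log\delta_0<0$; from $m\geq\epsilon_0$ together with $\max h_{k'}\geq\min h_{k'}$; and trivially). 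This proves \eqref{eq:95}. I do not anticipate any real obstacle: no subspace mass inequality or power-rearrangement trick is needed, reflecting the fact that $q<0$ behaves far more tamely than $q>0$. The only point requiring care is choosing $\delta_0$ small enough that both the limit hemisphere retains positive $\mu$-mass and the triangle-inequality transfer from $x_*$ to the moving directions $x_{k'}$ goes through, but both are handled together once $x_*$ is fixed.
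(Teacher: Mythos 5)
Your proof is correct and follows essentially the same approach as the paper: locate the maximum point $x_k$, extract a convergent subsequence, use $R_kx_k\in K_k$ to bound $h_k$ from below by $\delta_0\max h_k$ on a fixed spherical cap, and invoke the non-concentration hypothesis to get a positive $\mu$-mass on that cap. The only cosmetic difference is that the paper first reduces to the normalization $\min h_k=1$ and rescales at the end, whereas you track $\min h_{k'}$ directly on the complement of the cap; both lead to the same estimate.
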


\begin{proof}{}
For simplicity, write  
\begin{equation*}
  R_k=\max h_k,
  \qquad
  r_k=\min h_k.
\end{equation*}

\textup{(a)}  
We first consider the case $r_k=1$ for every $k$.
For each $h_k$, assume $R_k$ is attained at some $x_k\in\uS$, namely $R_k=h_k(x_k)$.
Without loss of generality, we can assume that
\begin{equation}\label{eq:94}
  \lim_{k\to+\infty} x_k=\tilde{x}\in\uS.
\end{equation}

For each $\delta\in(0,1)$, let
\begin{equation*}
  \Omega_{\delta}=\set{x\in\uS : x\cdot\tilde{x}>\delta},
\end{equation*}
which is increasing when $\delta\searrow0^+$, and
\begin{equation*}
  \lim_{\delta\to0^+} \Omega_{\delta}
  =\set{x\in\uS : x\cdot\tilde{x}>0}.
\end{equation*}
Therefore, we have
\begin{equation*}
  \lim_{\delta\to0^+} \frac{\mu(\Omega_{\delta})}{|\mu|}
  = \frac{\mu\left(\set{x\in\uS : x\cdot\tilde{x}>0}\right)}{|\mu|}
  >0,
\end{equation*}
where the inequality is due to the assumption that 
$\mu$ is not concentrated in any closed hemisphere of $\uS$. 
Thus, there exist two small positive numbers $\epsilon_0,\delta_0\in(0,1)$,
such that
\begin{equation}\label{eq:96}
  \frac{\mu(\Omega_{\delta_0})}{|\mu|}
  >\epsilon_0.
\end{equation}

Recalling \eqref{eq:94}, one can find a large $k_0$, such that
\begin{equation*}
  |x_k-\tilde{x}|
  <\frac{\delta_0}{2},
  \quad \forall\,k\geq k_0.
\end{equation*}
In the following proof, we always assume that $k\geq k_0$.
Noting $R_kx_k\in K_{h_k}$,
by the definition of support function, we have
\begin{equation}\label{eq:97}
  h_k(x) \geq R_kx_k\cdot x,
  \quad x\in\uS.
\end{equation}
Recalling $\tilde{x}\cdot x>\delta_0$ for any $x\in\Omega_{\delta_0}$,
there is
\begin{equation*}
  \begin{split}
    x_k\cdot x
    &\geq \tilde{x}\cdot x-|(x_k-\tilde{x})\cdot x| \\
    &\geq \tilde{x}\cdot x-|x_k-\tilde{x}| \\
    &>\frac{\delta_0}{2},
  \end{split}
\end{equation*}
which together with \eqref{eq:97} yields
\begin{equation}\label{eq:98} 
  h_k(x) > \frac{\delta_0}{2} R_k,
  \quad \forall\,x\in\Omega_{\delta_0}.
\end{equation}

Now recalling that $h_k\geq r_k=1$ on $\uS$, we have
\begin{equation} \label{eq:99}
  \begin{split}
    \frac{1}{|\mu|} \int_{\uS} \log h_{k} \dd\mu 
    &\geq \frac{1}{|\mu|} \int_{\Omega_{\delta_0}} \log h_{k} \dd\mu \\
    &\geq \frac{\mu(\Omega_{\delta_0})}{|\mu|} 
    \log\left( \frac{\delta_0}{2} R_k \right) \\
    &= \log \left((\delta_0/2)^m R_k^m \right),
  \end{split}
\end{equation}
where $m=\frac{\mu(\Omega_{\delta_0})}{|\mu|}$ is written for simplicity.
On account of \eqref{eq:96}, we have $\epsilon_0<m<1$.
By virtue of $\delta_0<1$ and $R_k\geq1$,
the above inequality is reduced to
\begin{equation}\label{eq:101}
  \frac{1}{|\mu|} \int_{\uS} \log h_{k} \dd\mu 
  \geq \log \left(
    \frac{\delta_0}{2} R_k^{\epsilon_0}
  \right),
\end{equation}
which is just our conclusion \eqref{eq:95} for the case $r_k=1$.

\textup{(b)}  
For the general case,
applying the proved estimate \eqref{eq:101} to the new sequence
$\set{h_k/r_k}$, we have
\begin{equation*}
  \frac{1}{|\mu|} \int_{\uS} \log\frac{h_{k}}{r_k} \dd\mu 
  \geq \log \left(
    \frac{\delta_0}{2}  
    \left( R_k/r_k \right)^{\epsilon_0}
  \right), 
\end{equation*}
which is just the general \eqref{eq:95}.
The proof of this lemma is completed.
\end{proof}

Then, we prove a sharp estimate about $\VTq$ when $q<0$.

\begin{lemma}\label{lem318}
Let $Q$ be a star body in $\R^n$.
For any convex body $K\subset\R^n$  
containing the origin in its interior,
we have 
\begin{equation} \label{eq:100}
  \VTq(K,Q)\approx 
  (\min\rho_K)^q
  \qquad \text{for }q<0,
\end{equation}
where the ``$\approx$'' means the ratio of the two sides has positive upper and lower
bounds depending only on $n$, $q$, $\min\rho_Q$ and $\max\rho_Q$.
\end{lemma}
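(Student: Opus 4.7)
The plan is to first reduce the statement to a two-sided bound on $\int_{\uS}\rho_K^q(u)\,du$. Since $q<0$ gives $n-q>0$, the factor $\rho_Q^{n-q}(u)$ appearing in the definition \eqref{dmV} is pinched between $(\min\rho_Q)^{n-q}$ and $(\max\rho_Q)^{n-q}$. Hence \eqref{eq:100} will follow once I show
\[
\int_{\uS}\rho_K^q(u)\,du \,\approx\, (\min\rho_K)^q,
\]
with constants depending only on $n$ and $q$.

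Write $r=\min_{\uS}\rho_K$. The upper bound is immediate: as $q<0$, the pointwise inequality $\rho_K\geq r$ on $\uS$ yields $\rho_K^q\leq r^q$, and integrating gives $\int_{\uS}\rho_K^q\,du\leq\omega_{n-1}r^q$.

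For the matching lower bound, pick $u_0\in\uS$ with $\rho_K(u_0)=r$ and set $p_0=ru_0\in\pd K$. The only nontrivial step is the following geometric observation, which I would isolate cleanly: because $r=\min_{u\in\uS}\rho_K(u)$ coincides with the Euclidean distance from the origin to $\pd K$, the open ball $B(0,r)$ lies inside $K$ and touches $\pd K$ at $p_0$; any supporting hyperplane of $K$ at $p_0$ must also support $B(0,r)$ at $p_0$, so the outer unit normal to $K$ at $p_0$ is forced to equal $u_0$. In particular $h_K(u_0)=p_0\cdot u_0=r$, so $K\subset\set{x\in\R^n : x\cdot u_0\leq r}$. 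Consequently, for every $u\in\uS$ with $u\cdot u_0>0$,
\[
\rho_K(u)\,(u\cdot u_0)\leq r,\qquad\text{hence}\qquad \rho_K(u)^q\geq r^q\,(u\cdot u_0)^{|q|},
\]
the direction of the inequality being flipped by $q<0$. Integrating over the hemisphere $\set{u\in\uS : u\cdot u_0>0}$ and using rotational invariance of the spherical measure yields
\[
\int_{\uS}\rho_K^q(u)\,du \geq r^q\int_{\uS\cap\set{u\cdot u_0>0}}(u\cdot u_0)^{|q|}\,du = c(n,q)\,r^q,
\]
with $c(n,q)>0$ depending only on $n$ and $q$. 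Combining the two bounds with the $\rho_Q$-reduction from the first paragraph delivers \eqref{eq:100}.

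The principal, and essentially only, subtlety is the convex-geometric claim that the outer normal at a nearest point of $\pd K$ to the origin is radial; everything else is bookkeeping and direct integration on the hemisphere.
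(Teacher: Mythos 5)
Your proposal is correct and follows essentially the same route as the paper: reduce to $\int_{\uS}\rho_K^q\,du$, get the upper bound from $\rho_K\geq r$ pointwise, and get the lower bound from the key inequality $\rho_K(u)\,(u\cdot u_0)\leq r$ on the hemisphere $\{u\cdot u_0>0\}$, which you establish via the inscribed-ball/supporting-hyperplane argument while the paper reads off $h_K(\tilde u)=\rho_K(\tilde u)=r$ and then applies the identity \eqref{eq:102}; both give the same estimate.
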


\begin{proof}{}
Assume $r=\min_{\uS}\rho_K$ is attained at some point $\tilde{u}\in\uS$.
Then, we have
\begin{equation*}
  r=\rho_K(\tilde{u})=h_K(\tilde{u}). 
\end{equation*}
By \eqref{eq:102}, there is
\begin{equation*} 
  \frac{1}{\rho_K(u)} 
  \geq \frac{u\cdot \tilde{u}}{h_K(\tilde{u})}
  = \frac{u\cdot \tilde{u}}{r},
  \quad \forall\,u\in\uS.
\end{equation*}
Therefore, we have for $q<0$ that
\begin{equation}\label{eq:103}
  \begin{split}
    \int_{\uS}\rho_K^q(u)\dd u
    &\geq \int_{u\cdot\tilde{u}>0}\rho_K^q(u)\dd u \\
    &\geq \int_{u\cdot\tilde{u}>0}
    \left(\frac{u\cdot \tilde{u}}{r} \right)^{-q} \dd u \\
    &= r^q \int_{\set{u\in\uS : u_1>0}} u_1^{-q} \dd u \\
    &=C_{n,q} \,r^q,
  \end{split}
\end{equation}
where $u_1$ denotes the first coordinate of $u$, and $C_{n,q}$ is a positive
number depending only on $n$ and $q$.
On the other hand, there is obviously that
\begin{equation*}
  \int_{\uS}\rho_K^q(u)\dd u
  \leq \int_{\uS} r^q \dd u
  =\omega_{n-1}\,r^q.
\end{equation*}
Combining this inequality and \eqref{eq:103}, we obtain
\begin{equation} \label{eq:104}
  \int_{\uS}\rho_K^q(u)\dd u
  \approx r^q,
  \quad \text{when }q<0.
\end{equation}
Now, we have
\begin{equation*}
  \begin{split}
    \widetilde{V}_q(K,Q)
    &= \frac{1}{n}\int_{\uS}\rho_K^q(u)\rho_Q^{n-q}(u)\dd u \\
    &\approx \int_{\uS}\rho_K^q(u)\dd u,
  \end{split}
\end{equation*}
which together with \eqref{eq:104} yields the conclusion \eqref{eq:100}.
\end{proof}

Now, we prove the minimizing problem \eqref{minP1} has a solution
under very weak constraints. 

\begin{lemma}\label{lem319}
Assume $q<0$, $Q$ is a star body in $\R^n$,
and $\mu$ is a finite Borel measure on $\uS$ which is not
concentrated in any closed hemisphere of $\uS$.
Then the minimizing problem \eqref{minP1} has a solution $h$.
In addition, the solution $h$ is the support function of $K_h$. 
\end{lemma}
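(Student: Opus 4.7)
The plan is to follow the variational strategy of Lemma \ref{lem300}, but with the evenness and subspace-mass hypotheses replaced by the weaker ingredients afforded by $q<0$, namely Lemmas \ref{lemEnt1} and \ref{lem318}. Pick a minimizing sequence $\{g_k\} \subset C^+(\uS)$ for \eqref{minP1} and set $h_k := h_{K_{g_k}}$. Since $h_k \le g_k$ pointwise and $K_{h_k}=K_{g_k}$, we get $J[h_k] \le J[g_k]$, so $\{h_k\}$ is still a minimizing sequence. The zeroth-order homogeneity $J[\lambda h]=J[h]$ that follows from \eqref{eq:82} lets me rescale so that $\min_{\uS} h_k = 1$ for every $k$; this forces $K_{h_k} \supset B^n$.

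Now I extract the two crucial a priori bounds. Applying Lemma \ref{lemEnt1} to $\{h_k\}$ yields, along a subsequence $\{h_{k'}\}$ and for suitable $\epsilon_0,\delta_0 \in (0,1)$, the entropy estimate
\[
  \frac{1}{|\mu|} \int_{\uS} \log h_{k'} \dd\mu
  \ge \log(\delta_0/2) + \epsilon_0 \log(\max h_{k'}),
\]
where the normalization $\min h_{k'}=1$ kills the other factor on the right of \eqref{eq:95}. On the volume side, Lemma \ref{lem318} gives $\widetilde{V}_q(K_{h_{k'}},Q) \approx (\min \rho_{K_{h_{k'}}})^q$. Using the pointwise inequality $\rho_K \le h_K$ on $\uS$ (immediate from $\rho_K(u)u \in K$ and the definition of $h_K$), I get $\min \rho_{K_{h_{k'}}} \le \min h_{k'} = 1$; since $q<0$ this forces
\[
  -\tfrac{1}{q}\log \widetilde{V}_q(K_{h_{k'}},Q) \ge -C,
\]
for a constant $C$ depending only on $n,q$, and $Q$. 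Adding the two estimates produces $J[h_{k'}] \ge \epsilon_0 \log(\max h_{k'}) - C'$.

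Since $\inf J \le J[1] < \infty$, $J[h_{k'}]$ is eventually bounded above, so the previous inequality yields an upper bound $\max h_{k'} \le M$. Combined with $\min h_{k'} = 1$, the bodies $K_{h_{k'}}$ lie in a fixed ball and contain $B^n$, so Blaschke's selection theorem provides a further subsequence with $K_{h_{k'}} \to K$ in Hausdorff distance, where $K \in \K_o^n$. Setting $h := h_K$ gives $h_{k'} \to h$ uniformly with $1 \le h \le M$, so $h \in C^+(\uS)$. To conclude, I verify continuity of $J$ along this convergent sequence: uniform convergence and uniform positivity of the $h_{k'}$ yield $\int \log h_{k'}\,\dd\mu \to \int \log h\,\dd\mu$ by bounded convergence, while uniform convergence $\rho_{K_{h_{k'}}} \to \rho_K$ on $\uS$ (valid because $K$ contains $B^n$) yields $\widetilde{V}_q(K_{h_{k'}},Q) \to \widetilde{V}_q(K_h,Q)$ via the integral formula \eqref{dmV}. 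Therefore $J[h] = \lim_{k'\to\infty} J[h_{k'}] = \inf_{g \in C^+(\uS)} J[g]$, so $h$ is a minimizer, and by construction $h = h_{K_h}$. The only step that genuinely uses the hypotheses on $\mu$, $q$, and $Q$ is the upper bound on $\max h_{k'}$; everything else is a routine translation of the $0<q<n$ argument.
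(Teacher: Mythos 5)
Your proof takes essentially the same route as the paper's: normalize the minimizing sequence of support functions so that $\min_{\uS}h_k=1$, combine the entropy lower bound from Lemma \ref{lemEnt1} with the dual-mixed-volume estimate from Lemma \ref{lem318} to bound $\max h_k$, and finish with Blaschke selection. The only minor variation is that the paper invokes the equality $\min\rho_{K_{h_k}}=\min h_k=1$ (both equal the inradius of $K_{h_k}$) to conclude $\widetilde{V}_q(K_{h_k},Q)\approx 1$, whereas you use only the one-sided inequality $\rho_K\le h_K$ to get $\min\rho_{K_{h_k}}\le 1$ and hence a lower bound on $\widetilde{V}_q$; since $q<0$, that one-sided bound is all that is needed, so both are fine.
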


\begin{proof}{}
Similar to the argument in the first paragraph
in the proof of Lemma \ref{lem300}, 
we can assume that a sequence of positive support functions
$\set{h_k}\subset C^+(\uS)$ is a minimizing sequence of \eqref{minP1}.
Due to the zeroth-order homogeneity of $J$,
we assume that $\min_{\uS}h_k=1$ for every $k$.

Applying Lemma \ref{lemEnt1} to the sequence $\set{h_k}$,
there exists a subsequence, still denoted by $\set{h_k}$,
two small positive numbers $\epsilon_0,\delta_0\in(0,1)$,
and an integer $k_0$, such that for any $k\geq k_0$,
\begin{equation*} 
  \frac{1}{|\mu|} \int_{\uS} \log h_{k} \dd\mu
  \geq \log\left(
    \frac{\delta_0}{2} 
    (\max h_{k})^{\epsilon_0}
  \right).
\end{equation*} 
Since $q<0$ and $\min\rho_{K_{h_k}}=\min h_k=1$, by Lemma \ref{lem318},
there is
\begin{equation*} 
  \VTq(K_{h_k},Q)\approx 1,
  \quad \forall\,k.
\end{equation*}
Therefore, we have for $k\geq k_0$ that
\begin{equation}\label{eq:105}
  \begin{split}
    J[h_k]
    &=
    \frac{1}{|\mu|} \int_{\uS} \log h_k \dd\mu
    -\frac{1}{q}\log\widetilde{V}_q(K_{h_k},Q) \\
    &\geq \log\left( \frac{\delta_0}{2} 
      (\max h_{k})^{\epsilon_0} \right)
    -C_0,
  \end{split} 
\end{equation}
where $C_0$ is a positive constant depending only on
$n$, $q$, $\min\rho_Q$ and $\max\rho_Q$.

Recalling that $\set{h_k}$ is a minimizing sequence of \eqref{minP1},
without loss of generality, one can assume 
\begin{equation*}
  J[h_{k}]<1+J[1], 
  \quad \forall\,k\geq k_0.
\end{equation*}
Here $J[1]= -\frac{1}{q}\log\widetilde{V}_{n-q}(Q)$ is a finite number.
Combining it with \eqref{eq:105}, we obtain 
\begin{equation*}
  \max h_k\leq
  \left(\frac{C_1}{\delta_0}
  \right)^{1/\epsilon_0},
  \quad \forall\,k\geq k_0,
\end{equation*}
where $C_1$ is a positive constant depending only on
$n$, $q$, $\min\rho_Q$ and $\max\rho_Q$.
Recall $\min h_k=1$ for every $k$.
We see that $\set{h_k}$ has uniform positive lower and upper bounds.

Applying Blaschke selection theorem to $\set{h_{k}}$,
there is a subsequence, still denoted by $\set{h_{k}}$,
which uniformly converges to some support function $h$ on $\uS$.
Obviously, $h\in C^+(\uS)$.
Correspondingly, $K_{h_{k}}$ converges to $K_h\in \mathcal{K}_o^n$
which is the convex body determined by $h$.
Recalling the definition of $J$, there is $\lim_{k\to+\infty}J[h_{k}]=J[h]$.
Thus,
\begin{equation*}
  J[h]= \inf\set{ J[g] : g\in C^+(\uS)}.
\end{equation*}  
Therefore, $h$ is a solution to the minimizing problem \eqref{minP1}.
The proof of this lemma is now completed.
\end{proof}

For the minimizer $h$ obtained in Lemma \ref{lem319},
by repeating verbatim the proof of Lemma \ref{lem317},
but without requiring evenness,
one can see that 
$\widetilde{C}_q(c K_{h},Q) =\mu$ for some positive number $c$.
This is precisely the sufficiency part of Theorem \ref{thm4}.
The necessity part is obvious.
Thus, we have proved Theorem \ref{thm4} is true.

\section{The case $q=0$}
\label{sec6}

In this section, we prove Theorem \ref{thm3}.
On account of Lemma \ref{lemVari}, the functional of $\widetilde{C}_0$ is
different from that of $\widetilde{C}_q$ for $q\neq0$. 
Therefore, we consider a different minimizing problem for the case $q=0$:
\begin{equation}\label{minP2}
  \inf\set{ \tilde{J}[g] : g\in C_e^+(\uS)},
\end{equation} 
where
\begin{equation} \label{Jt}
  \tilde{J}[g]=
  \frac{1}{|\mu|} \int_{\uS} \log g\, \dd\mu 
  -\frac{1}{\VOL(Q)} \widetilde{E}(K_g,Q).
\end{equation}
Here, $K_g$ is still the Alexandrov body associated with $g$,
and $\widetilde{E}$ is the dual mixed entropy given in \eqref{dmEnt}.
As before, our main work is to prove that
the minimizing problem \eqref{minP2} has a solution.

Fortunately, the part $\frac{1}{|\mu|} \int_{\uS} \log g\, \dd\mu$ here
can be still handled via Lemma \ref{lemEnt1}.
We only need to deal with the term $\widetilde{E}$.

\begin{lemma}\label{lem320}
Let $Q$ be a star body in $\R^n$.
For any origin-symmetric convex body $K\subset\R^n$, we have 
\begin{equation} \label{eq:108}
  \widetilde{E}(K,Q)\leq
  \VOL(Q) \log (\min\rho_K) +C_{Q},
\end{equation}
where $C_Q$ is a positive constant depending only on $n$ and $Q$.
\end{lemma}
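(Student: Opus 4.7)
\textbf{Proof proposal for Lemma \ref{lem320}.} Set $r=\min_{\uS}\rho_K$. The plan is to reduce to the case of an origin-centered ellipsoid via John's theorem for origin-symmetric bodies, and then perform a direct integral estimate that isolates $\log r$ from a term depending only on $n$ and $Q$.

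Let $E$ be the maximum-volume ellipsoid of the origin-symmetric body $K$, which is automatically origin-centered and satisfies $E\subset K\subset\sqrt{n}\,E$ (the same inclusion already used in \eqref{eq:3}). Then $\rho_E\le\rho_K\le\sqrt{n}\,\rho_E$, so
\begin{equation*}
  \widetilde{E}(K,Q)
  \le \frac{1}{n}\int_{\uS}\log\bigl(\sqrt{n}\,\rho_E/\rho_Q\bigr)\rho_Q^n\dd u
  = (\log\sqrt{n})\VOL(Q)+\widetilde{E}(E,Q),
\end{equation*}
using $\VOL(Q)=\frac{1}{n}\int_{\uS}\rho_Q^n\dd u$. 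Moreover $b_1:=\min\rho_E\le\min\rho_K=r$ since $E\subset K$, so it suffices to prove the ellipsoid version
\begin{equation}\label{plan:ell}
  \widetilde{E}(E,Q)\le \VOL(Q)\log b_1+\tilde C_Q,
\end{equation}
with $\tilde C_Q$ depending only on $n$ and $Q$.

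To prove \eqref{plan:ell}, choose coordinates aligned with the axes of $E$, labelled so that the semi-axes satisfy $b_1\le b_2\le\cdots\le b_n$. Then $\rho_E(u)=\bigl(\sum_i u_i^2/b_i^2\bigr)^{-1/2}$, and since $\sum_i u_i^2/b_i^2\ge u_1^2/b_1^2$, we obtain the key pointwise bound
\begin{equation*}
  1\le \frac{\rho_E(u)}{b_1}\le \frac{1}{|u_1|}\quad\text{for a.e.\ }u\in\uS.
\end{equation*}
Splitting $\log(\rho_E/\rho_Q)=\log(\rho_E/b_1)+\log(b_1/\rho_Q)$ and using $\rho_Q\le\max\rho_Q$ together with $\log(\rho_E/b_1)\ge 0$ gives
\begin{equation*}
  \widetilde{E}(E,Q)
  \le \frac{(\max\rho_Q)^n}{n}\int_{\uS}\bigl(-\log|u_1|\bigr)\dd u
  +(\log b_1)\VOL(Q)
  -\frac{1}{n}\int_{\uS}\log\rho_Q\cdot\rho_Q^n\dd u.
\end{equation*}
The first and third terms are finite constants depending only on $n$ and $Q$; the first is finite because for $n\ge 2$ the function $-\log|u_1|$ is integrable on $\uS$ (the marginal density of $u_1$ is bounded by $(1-u_1^2)^{(n-3)/2}$, which integrates $|\log|u_1||$ on $[-1,1]$). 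This establishes \eqref{plan:ell}, and combining with the reduction above yields the claim with $C_Q=(\log\sqrt{n})\VOL(Q)+\tilde C_Q$.

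The main obstacle is step \eqref{plan:ell}: one must extract a factor $\log b_1$ cleanly while keeping the remainder uniform in the shape of $E$, even as the ratio $b_n/b_1$ degenerates. The saving observation is the elementary pointwise bound $\rho_E(u)/b_1\le 1/|u_1|$, which transfers all dependence on the eccentricity of $E$ to the single integrable function $-\log|u_1|$ on $\uS$; this is what makes the final constant depend only on $n$ and $Q$ (through $\max\rho_Q$ and $\int\log\rho_Q\cdot\rho_Q^n$) and not on the shape of $K$.
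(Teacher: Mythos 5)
Your proof is correct, but it takes an unnecessary detour.

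The paper establishes the key pointwise bound directly on the general origin-symmetric body $K$, not just on an ellipsoid. Writing $r=\min_{\uS}\rho_K$, by origin symmetry the minimum is attained at $\pm\tilde{u}$, and at those antipodal points one has $\rho_K(\pm\tilde{u})=h_K(\pm\tilde{u})=r$ (the supporting hyperplane at the nearest boundary point to the origin is perpendicular to the line from the origin). The identity \eqref{eq:102}, namely $\frac{1}{\rho_K(u)}=\max_{x\in\uS}\frac{u\cdot x}{h_K(x)}$, then gives immediately
\begin{equation*}
  \frac{1}{\rho_K(u)}\ \geq\ \frac{|u\cdot\tilde{u}|}{r},
  \qquad\text{i.e.}\qquad
  \rho_K(u)\ \leq\ \frac{r}{|u\cdot\tilde{u}|},
\end{equation*}
which is exactly your ellipsoid bound $\rho_E(u)\le b_1/|u_1|$ but for the body $K$ itself. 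From there, the split of $\log(\rho_K/\rho_Q)$ and the integral of $-\log|u_1|$ over $\uS$ proceed exactly as in your argument. So your reduction to the John ellipsoid (costing an extra $(\log\sqrt{n})\VOL(Q)$ slack) is correct but superfluous: the same pointwise inequality is available for general origin-symmetric $K$ via the duality between radial and support functions. The rest of your computation --- using $\rho_Q\le\max\rho_Q$ only on the nonnegative part, keeping $\log b_1\cdot\VOL(Q)$ exact, and appealing to integrability of $-\log|u_1|$ on $\uS$ --- matches the paper's. Your observation that the eccentricity of $E$ (or, in the paper's version, the shape of $K$) is entirely absorbed into the single integrable function $-\log|u\cdot\tilde{u}|$ is indeed the crux of the lemma, and you have identified it correctly.
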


\begin{proof}{}
Since $K$ is origin-symmetric, we can assume that
$r=\min_{\uS}\rho_K$ is attained at points $\pm\tilde{u}\in\uS$.
Then, we have
\begin{equation*}
  r=\rho_K(\pm\tilde{u})=h_K(\pm\tilde{u}). 
\end{equation*}
By \eqref{eq:102}, there is
\begin{equation*} 
  \frac{1}{\rho_K(u)} 
  \geq \frac{u\cdot(\pm\tilde{u})}{h_K(\pm\tilde{u})}
  = \frac{\pm u\cdot \tilde{u}}{r},
  \quad \forall\,u\in\uS.
\end{equation*}
Therefore, we obtain
\begin{equation*} 
  \frac{1}{\rho_K(u)} 
  \geq \frac{|u\cdot \tilde{u}|}{r},
  \quad \forall\,u\in\uS,
\end{equation*}
namely,
\begin{equation*} 
  \log \rho_K(u) \leq
  \log r +\log|u\cdot\tilde{u}|^{-1},
  \quad \forall\,u\cdot\tilde{u}\neq0.
\end{equation*}
Thus, 
\begin{equation}\label{eq:109}
  \begin{split}
    \frac{1}{n} \int_{\uS} \rho_Q^n(u) \log \rho_K(u) \dd u
    &\leq
    \frac{1}{n} \int_{\uS} \rho_Q^n(u) \log r \dd u
    +\frac{1}{n} \int_{\uS} \rho_Q^n(u) \log|u\cdot\tilde{u}|^{-1} \dd u \\
    &\leq
    \VOL(Q) \log r 
    +\frac{1}{n} (\max\rho_Q)^n \int_{\uS} \log|u\cdot\tilde{u}|^{-1} \dd u \\
    &=
    \VOL(Q) \log r 
    +\frac{1}{n} (\max\rho_Q)^n \int_{\uS} \log|u_1|^{-1} \dd u \\
    &=
    \VOL(Q) \log r +C_1,
  \end{split}
\end{equation}
where $u_1$ denotes the first coordinate of $u$, and $C_1$ is a positive
number depending only on $n$ and $\max\rho_Q$.
Now recalling the definition of $\widetilde{E}$ in \eqref{dmEnt}, we have
\begin{equation*}
  \widetilde{E}(K,Q)
  = \frac{1}{n}
  \int_{\uS} \rho_Q^n(u) \log \rho_K(u) \dd u
  - \frac{1}{n}
  \int_{\uS} \rho_Q^n(u) \log \rho_Q(u) \dd u,
\end{equation*}
which together with \eqref{eq:109} yields the conclusion \eqref{eq:108}.
\end{proof}

Before prove the existence of a minimizer, we note that
$\tilde{J}$ in \eqref{Jt} is homogeneous of degree zero.
Namely, for $g\in C^+(\uS)$ and $\lambda>0$, there is
\begin{equation} \label{eq:107}
  \tilde{J}[\lambda g]=\tilde{J}[g].
\end{equation}
In fact, by the definition of $\widetilde{E}$ in \eqref{dmEnt}, we have
\begin{equation*}
  \widetilde{E}(K_{\lambda g},Q)
  = \widetilde{E}(K_{g},Q) +\VOL(Q)\log\lambda,
\end{equation*}
which obviously implies \eqref{eq:107}.

\begin{lemma}\label{lem321}
Assume $Q$ is an origin-symmetric star body in $\R^n$,
and $\mu$ is a finite even Borel measure on $\uS$ which 
is not concentrated on any great sub-sphere of $\uS$.
Then, the minimizing problem \eqref{minP2} has a solution $h$.
In addition, the solution $h$ is the support function of $K_h$. 
\end{lemma}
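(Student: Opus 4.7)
The plan is to adapt the arguments of Lemmas \ref{lem300} and \ref{lem319} to the entropy functional $\tilde{J}$. Let $\{g_k\} \subset C_e^+(\uS)$ be a minimizing sequence for \eqref{minP2}. Replacing each $g_k$ by the support function $h_k$ of its Alexandrov body $K_{g_k}$, we have $h_k \in C_e^+(\uS)$ (by evenness of $g_k$), $h_k \leq g_k$, and $K_{h_k} = K_{g_k}$, so $\tilde{J}[h_k] \leq \tilde{J}[g_k]$ and $\{h_k\}$ is also minimizing. Using the zeroth-order homogeneity \eqref{eq:107}, I normalize $\min_{\uS} h_k = 1$ for each $k$; the goal is then to obtain a uniform upper bound on $\max_{\uS} h_k$.

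Because $\mu$ is even, the hypothesis ``not concentrated on any great sub-sphere'' is equivalent to ``not concentrated in any closed hemisphere'': a hemispherical concentration together with its antipodal image would force $\mu$ onto the bounding great sub-sphere. Hence Lemma \ref{lemEnt1} applies to $\{h_k\}$ and, after passing to a subsequence, yields constants $\epsilon_0,\delta_0 \in (0,1)$ and $k_0$ such that for $k \geq k_0$,
\begin{equation*}
  \frac{1}{|\mu|} \int_{\uS} \log h_k \dd\mu \geq \log\!\left( \frac{\delta_0}{2} (\max h_k)^{\epsilon_0} \right).
\end{equation*}
For the entropy term, observe the identity $\min_{\uS} \rho_K = \min_{\uS} h_K$ for any $K \in \K_o^n$: from \eqref{eq:102}, $1/\rho_K(u) = \max_x (u\cdot x/h_K(x)) \leq 1/\min h_K$ since $u\cdot x \leq 1$, while $h_K(x) \geq \rho_K(x)\,x\cdot x = \rho_K(x)$ supplies the opposite inequality. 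Consequently $\min \rho_{K_{h_k}} = 1$, and Lemma \ref{lem320} gives $\widetilde{E}(K_{h_k},Q) \leq C_Q$. Combining,
\begin{equation*}
  \tilde{J}[h_k] \geq \log\!\left( \frac{\delta_0}{2} (\max h_k)^{\epsilon_0} \right) - \frac{C_Q}{\VOL(Q)}.
\end{equation*}

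Since $\tilde{J}[h_k] \to \inf \tilde{J} \leq \tilde{J}[1] < \infty$, we may assume $\tilde{J}[h_k] < 1 + \tilde{J}[1]$, which forces a uniform upper bound on $\max h_k$. Thus $\{h_k\}$ has uniform positive lower and upper bounds. By the Blaschke selection theorem applied to $\{K_{h_k}\}$, a further subsequence (still denoted $\{h_k\}$) converges uniformly to a support function $h$ of some $K_h \in \K_e^n$, and $h \in C_e^+(\uS)$. The functional $\tilde{J}$ is continuous along this sequence: the $\log h_k$ integral converges by uniform convergence plus the positive lower bound, while $\widetilde{E}(K_{h_k},Q) \to \widetilde{E}(K_h,Q)$ by dominated convergence using the uniform bounds on $\rho_{K_{h_k}}$. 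Hence $\tilde{J}[h] = \inf_{C_e^+(\uS)} \tilde{J}$, with $h$ the support function of $K_h$.

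The main obstacle is matching the normalization of $\min h_k$ with the form of Lemma \ref{lem320}, which controls $\widetilde{E}(K,Q)$ via $\min \rho_K$ rather than $\min h_K$; the identity $\min \rho_K = \min h_K$ bridges this gap and is what allows the entropy term to be treated as cleanly as $\VTq$ was in the $q<0$ case. A secondary (but routine) point is the conversion between the two concentration hypotheses via evenness of $\mu$.
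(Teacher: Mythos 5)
Your proof is correct and follows essentially the same route as the paper: pass to the support functions $h_k$ of the Alexandrov bodies, normalize $\min h_k = 1$, combine Lemma \ref{lemEnt1} with Lemma \ref{lem320} to get a uniform upper bound on $\max h_k$, and finish with Blaschke selection. The only addition is your explicit verification of the identity $\min_{\uS}\rho_K = \min_{\uS}h_K$ (which the paper uses silently), and that argument is correct.
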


\begin{proof}{}
Assume $\set{g_k}\subset C_e^+(\uS)$ is a minimizing sequence of \eqref{minP2}.
Since $g_k$ is even, the Alexandrov body $K_{g_k}$ is origin-symmetric.
Let $h_k$ be the support function of $K_{g_k}$.
Then we have that $h_k\in C_e^+(\uS)$, $h_k\leq g_k$, and $K_{h_k}=K_{g_k}$.
Therefore,
\begin{equation*}
  \begin{split}
    \tilde{J}[h_k] &=
    \frac{1}{|\mu|} \int_{\uS} \log h_k \dd\mu 
    -\frac{1}{\VOL(Q)} \widetilde{E}(K_{h_k},Q) \\
    &\leq
    \frac{1}{|\mu|} \int_{\uS} \log g_k \dd\mu
    -\frac{1}{\VOL(Q)} \widetilde{E}(K_{g_k},Q) \\
    &= \tilde{J}[g_k],
  \end{split} 
\end{equation*}
implying that
$\set{h_k}$ is also a minimizing sequence of \eqref{minP2}.
Recalling the zeroth-order homogeneity of $\tilde{J}$ given in \eqref{eq:107},
we can assume that $\min_{\uS}h_k=1$ for every $k$.
  
Since $\mu$ is even and
not concentrated on any great sub-sphere of $\uS$,
it is not concentrated in any closed hemisphere of $\uS$. 
Then applying Lemma \ref{lemEnt1} to the sequence $\set{h_k}$,
there exists a subsequence, still denoted by $\set{h_k}$,
two small positive numbers $\epsilon_0,\delta_0\in(0,1)$,
and an integer $k_0$, such that for any $k\geq k_0$,
\begin{equation*} 
  \frac{1}{|\mu|} \int_{\uS} \log h_{k} \dd\mu
  \geq \log\left(
    \frac{\delta_0}{2} 
    (\max h_{k})^{\epsilon_0}
  \right).
\end{equation*} 
Noting that $K_{h_k}$ is origin-symmetric,
and $\min\rho_{K_{h_k}}=\min h_k=1$,
by Lemma \ref{lem320}, there is
\begin{equation*} 
  \widetilde{E}(K_{h_k},Q)\leq C_{Q},
\end{equation*}
where $C_Q$ is a positive constant depending only on $n$ and $Q$.
Therefore, we have for $k\geq k_0$ that
\begin{equation}\label{eq:110}
  \begin{split}
    \tilde{J}[h_k] &=
    \frac{1}{|\mu|} \int_{\uS} \log h_k \dd\mu 
    -\frac{1}{\VOL(Q)} \widetilde{E}(K_{h_k},Q) \\
    &\geq \log\left( \frac{\delta_0}{2} 
      (\max h_{k})^{\epsilon_0} \right)
    -\frac{C_Q}{\VOL(Q)}.
  \end{split} 
\end{equation}

Recalling that $\set{h_k}$ is a minimizing sequence of \eqref{minP2},
without loss of generality, one can assume 
\begin{equation}\label{eq:111}
  \tilde{J}[h_{k}]<1+\tilde{J}[1], 
  \quad \forall\,k\geq k_0.
\end{equation}
Note that
\begin{equation*}
  \tilde{J}[1]
  = \frac{1}{n\VOL(Q)}
  \int_{\uS} \rho_Q^n(u) \log \rho_Q(u) \dd u
\end{equation*}
is a finite number depending only on $n$ and $Q$.
Combining \eqref{eq:110} and \eqref{eq:111}, we obtain 
\begin{equation*}
  \max h_k\leq
  \left(\frac{C_1}{\delta_0}
  \right)^{1/\epsilon_0},
  \quad \forall\,k\geq k_0,
\end{equation*}
where $C_1$ is a positive constant depending only on $n$ and $Q$.
Recall $\min h_k=1$ for every $k$.
We see that $\set{h_k}$ has uniform positive lower and upper bounds.

Applying Blaschke selection theorem to $\set{h_{k}}$,
there is a subsequence, still denoted by $\set{h_{k}}$,
which uniformly converges to some support function $h$ on $\uS$.
Obviously, $h\in C_e^+(\uS)$.
Correspondingly, $K_{h_{k}}$ converges to $K_h\in \mathcal{K}_e^n$
which is the convex body determined by $h$.
Recalling the definition of $\tilde{J}$, there is
$\lim_{k\to+\infty}\tilde{J}[h_{k}]=\tilde{J}[h]$.
Thus,
\begin{equation*}
  \tilde{J}[h]= \inf\set{ \tilde{J}[g] : g\in C_e^+(\uS)}.
\end{equation*}  
Therefore, $h$ is a solution to the minimizing problem \eqref{minP2}.
The proof of this lemma is now completed.
\end{proof}

Now, we can prove the sufficiency part of Theorem \ref{thm3}.

\begin{lemma}\label{lem324}
Assume $Q$ is an origin-symmetric star body in $\R^n$.
If $\mu$ is a finite even Borel measure on $\uS$ which
is not concentrated on any great sub-sphere of $\uS$
and $|\mu|=\VOL(Q)$,
then there exists an origin-symmetric convex body $K$ in $\R^n$
such that
\begin{equation*}
  \widetilde{C}_0(K,Q,\cdot)=\mu.
\end{equation*}
\end{lemma}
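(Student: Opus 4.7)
The plan is to invoke the minimizer $h$ produced by Lemma \ref{lem321} and use the variational formula Lemma \ref{lemVari} (the $q=0$ case) to show that $\widetilde{C}_0(K_h,Q,\cdot)$ is a constant multiple of $\mu$. The hypothesis $|\mu|=\VOL(Q)$ then forces equality without any rescaling, and this is indeed essential because, by \eqref{eq:82} with $q=0$, the measure $\widetilde{C}_0(\lambda K,Q,\cdot)=\widetilde{C}_0(K,Q,\cdot)$ is scale-invariant in $K$, so unlike the $q\neq 0$ arguments (e.g.\ Lemma \ref{lem317}), one cannot adjust the total mass by homothety.

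First I would apply Lemma \ref{lem321}: since $\mu$ is a finite even Borel measure not concentrated on any great sub-sphere, there exists a minimizer $h\in C_e^+(\uS)$ of the functional $\tilde J$ in \eqref{minP2}, and $h$ is the support function of $K_h$. For any fixed $\varphi\in C(\uS)$ that is even, set $g_t=h+t\varphi$; since $h$ is bounded below by a positive constant on $\uS$, we have $g_t\in C_e^+(\uS)$ for all sufficiently small $t\in\R$, and $(g_t-h)/t\to\varphi$ uniformly.

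Next I would differentiate $\tilde J[g_t]$ at $t=0$. The second term is handled by Lemma \ref{lemVari} for $q=0$, giving
\begin{equation*}
  \frac{\dd}{\dd t}\widetilde{E}(K_{g_t},Q)\Big|_{t=0}
  =\int_{\uS}\varphi\,h^{-1}\dd\widetilde{C}_0(K_h,Q).
\end{equation*}
The first term $\frac{1}{|\mu|}\int_{\uS}\log g_t\,\dd\mu$ differentiates under the integral sign by dominated convergence, since $\log g_t$ has a uniform $L^\infty$ bound on a small interval around $t=0$. Because $h$ is a minimizer, $\tilde J[g_t]\ge\tilde J[h]$ on both sides of $0$, so the derivative vanishes:
\begin{equation*}
  0=\frac{1}{|\mu|}\int_{\uS}\varphi\,h^{-1}\dd\mu
  -\frac{1}{\VOL(Q)}\int_{\uS}\varphi\,h^{-1}\dd\widetilde{C}_0(K_h,Q).
\end{equation*}

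Finally, since $\varphi$ is an arbitrary even continuous function and both $\mu$ and $\widetilde{C}_0(K_h,Q,\cdot)$ are even Borel measures on $\uS$, the above identity (applied to $\varphi\cdot h$ in place of $\varphi$, which is again an arbitrary even continuous function) yields
\begin{equation*}
  \frac{\mu}{|\mu|}=\frac{\widetilde{C}_0(K_h,Q,\cdot)}{\VOL(Q)}.
\end{equation*}
Using the hypothesis $|\mu|=\VOL(Q)$ we obtain $\widetilde{C}_0(K_h,Q,\cdot)=\mu$, and since $h\in C_e^+(\uS)$ makes $K=K_h$ an origin-symmetric convex body in $\K_e^n$, the lemma follows. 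The only delicate issue is passing from the test-function identity to the identity of measures; here the fact that both measures are even, together with the density of $\varphi\mapsto\varphi h$ among even continuous functions (since $h$ is positive continuous even), resolves the matter. No genuinely hard step remains beyond Lemma \ref{lem321}; the argument is a streamlined echo of Lemma \ref{lem317}, only without the homothety rescaling at the end.
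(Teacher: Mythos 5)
Your proposal is correct and follows essentially the same route as the paper: invoke Lemma \ref{lem321} for the minimizer $h$, differentiate $\tilde J$ along $g_t=h+t\varphi$ using Lemma \ref{lemVari}, and use evenness plus the normalization $|\mu|=\VOL(Q)$ to conclude. The one small refinement you add (replacing $\varphi$ by $\varphi h$ to pass from the test-function identity to equality of measures) is a clean way to make explicit a step the paper leaves implicit, but it is not a different argument.
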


\begin{proof}
Applying Lemma \ref{lem321},   
the minimizing problem \eqref{minP2} has a solution $h\in C_e^+(\uS)$,
which is the support function of $K_h$.
For any given continuous even function $\varphi\in C(\uS)$, let
\begin{equation*}
  g_t=h+t\varphi \quad \text{for small } t\in\R.
\end{equation*}
Since $h\in C^+_{e}(\uS)$, for $t$ sufficiently small $g_t\in C^+_{e}(\uS)$ as well.
By Lemma \ref{lemVari}, we have
\begin{equation} \label{eq:112}
  \frac{\dd}{\dd t} \widetilde{E}(K_{g_t},Q) \Big|_{t=0}
  = \int_{\uS} \varphi h^{-1} \dd\widetilde{C}_0(K_{h},Q).
\end{equation}

Write $\tilde{J}(t)=\tilde{J}[g_t]$.
Then $\tilde{J}(0)=\tilde{J}[h]$.
Since $h$ is a minimizer of \eqref{minP2}, there is
\begin{equation*}
  \tilde{J}(t)\geq \tilde{J}(0)
  \quad \text{for any small } t\in\R,
\end{equation*}
which together with \eqref{eq:112} and
the definition of $\tilde{J}$ in \eqref{Jt}
yields that
\begin{equation*}
  \begin{split}
    0&= \frac{\dd}{\dd t} \tilde{J}(t) \Big|_{t=0} \\
    &= \frac{\dd}{\dd t} \left(
      \frac{1}{|\mu|} \int_{\uS} \log g_t \dd\mu 
      -\frac{1}{\VOL(Q)} \widetilde{E}(K_{g_t},Q)
    \right) \bigg|_{t=0} \\
    &=
    \frac{1}{|\mu|} \int_{\uS} \varphi h^{-1} \dd\mu 
    -\frac{1}{\VOL(Q)}
    \int_{\uS} \varphi h^{-1} \dd\widetilde{C}_0(K_{h},Q).
  \end{split}
\end{equation*}
Note that this equality holds for arbitrary even function $\varphi$,
and that $\mu$, $\widetilde{C}_0(K_h,Q)$ are even Borel measures.
Therefore, we obtain
\begin{equation*}
  \frac{1}{|\mu|} \mu 
  =\frac{1}{\VOL(Q)}
  \widetilde{C}_0(K_{h},Q).
\end{equation*}
By the assumption that $|\mu|=\VOL(Q)$, we have
\begin{equation*}
  \mu = \widetilde{C}_0(K_{h},Q),
\end{equation*}
which is just the conclusion of this lemma.
\end{proof}

The necessity part of Theorem \ref{thm3} is obvious.
The proof of Theorem \ref{thm3} is completed.


\bibliographystyle{siam}

\begin{thebibliography}{10}

\bibitem{Ale.CRDASUN.35-1942.131}
{\sc A.~Alexandroff}, {\em Existence and uniqueness of a convex surface with a
  given integral curvature}, C. R. (Doklady) Acad. Sci. URSS (N.S.), 35 (1942),
  pp.~131--134.

\bibitem{BF.JDE.266-2019.7980}
{\sc K.~J. B\"{o}r\"{o}czky and F.~Fodor}, {\em The {$L_p$} dual {M}inkowski
  problem for {$p>1$} and {$q>0$}}, J. Differential Equations, 266 (2019),
  pp.~7980--8033.

\bibitem{BHZ.IMRNI.2016.1807}
{\sc K.~J. B\"or\"oczky, P.~Heged{\H u}s, and G.~Zhu}, {\em On the discrete
  logarithmic {M}inkowski problem}, Int. Math. Res. Not. IMRN,  (2016),
  pp.~1807--1838.

\bibitem{BHP.JDG.109-2018.411}
{\sc K.~J. B\"{o}r\"{o}czky, M.~Henk, and H.~Pollehn}, {\em Subspace
  concentration of dual curvature measures of symmetric convex bodies}, J.
  Differential Geom., 109 (2018), pp.~411--429.

\bibitem{BLYZ.JAMS.26-2013.831}
{\sc K.~J. B{\"o}r{\"o}czky, E.~Lutwak, D.~Yang, and G.~Zhang}, {\em The
  logarithmic {M}inkowski problem}, J. Amer. Math. Soc., 26 (2013),
  pp.~831--852.

\bibitem{BLY+.Adv.356-2019.106805}
{\sc K.~J. B\"{o}r\"{o}czky, E.~Lutwak, D.~Yang, G.~Zhang, and Y.~Zhao}, {\em
  The dual {M}inkowski problem for symmetric convex bodies}, Adv. Math., 356
  (2019), pp.~106805, 30.

\bibitem{CHZ.MA.373-2019.953}
{\sc C.~Chen, Y.~Huang, and Y.~Zhao}, {\em Smooth solutions to the {$L_p$} dual
  {M}inkowski problem}, Math. Ann., 373 (2019), pp.~953--976.

\bibitem{CCL.AP.14-2021.689}
{\sc H.~Chen, S.~Chen, and Q.-R. Li}, {\em Variations of a class of
  {M}onge-{A}mp\`ere-type functionals and their applications}, Anal. PDE, 14
  (2021), pp.~689--716.

\bibitem{CLLX.JGA.32-2022.40}
{\sc L.~Chen, Y.~Liu, J.~Lu, and N.~Xiang}, {\em Existence of smooth even
  solutions to the dual {O}rlicz-{M}inkowski problem}, J. Geom. Anal., 32
  (2022), pp.~Paper No. 40, 25.

\bibitem{CFL.Adv.411-2022.108782}
{\sc S.~Chen, Y.~Feng, and W.~Liu}, {\em Uniqueness of solutions to the
  logarithmic {M}inkowski problem in {$\Bbb R^3$}}, Adv. Math., 411 (2022),
  pp.~Paper No. 108782, 18.

\bibitem{CL.Adv.333-2018.87}
{\sc S.~Chen and Q.-R. Li}, {\em On the planar dual {M}inkowski problem}, Adv.
  Math., 333 (2018), pp.~87--117.

\bibitem{CLZ.TAMS.371-2019.2623}
{\sc S.~Chen, Q.-r. Li, and G.~Zhu}, {\em The logarithmic {M}inkowski problem
  for non-symmetric measures}, Trans. Amer. Math. Soc., 371 (2019),
  pp.~2623--2641.

\bibitem{Che.ANS.23-2023.20220068}
{\sc Z.~Chen}, {\em {\it {A} priori} bounds, existence, and uniqueness of
  smooth solutions to an anisotropic {$L_p$} {M}inkowski problem for
  log-concave measure}, Adv. Nonlinear Stud., 23 (2023), pp.~Paper No.
  20220068, 21.

\bibitem{EH.AiAM.151-2023.25}
{\sc K.~Eller and M.~Henk}, {\em On subspace concentration for dual curvature
  measures}, Adv. in Appl. Math., 151 (2023), pp.~Paper No. 102581, 25.

\bibitem{FHX.JDE.363-2023.350}
{\sc Y.~Feng, S.~Hu, and L.~Xu}, {\em On the {$L_p$} {G}aussian {M}inkowski
  problem}, J. Differential Equations, 363 (2023), pp.~350--390.

\bibitem{FLX.JGA.33-2023.39}
{\sc Y.~Feng, W.~Liu, and L.~Xu}, {\em Existence of non-symmetric solutions to
  the {G}aussian {M}inkowski problem}, J. Geom. Anal., 33 (2023), pp.~Paper No.
  89, 39.

\bibitem{GHW+.CVPDE.58-2019.12}
{\sc R.~J. Gardner, D.~Hug, W.~Weil, S.~Xing, and D.~Ye}, {\em General volumes
  in the {O}rlicz-{B}runn-{M}inkowski theory and a related {M}inkowski problem
  {I}}, Calc. Var. Partial Differential Equations, 58 (2019), pp.~Paper No. 12,
  35.

\bibitem{GHXY.CVPDE.59-2020.15}
{\sc R.~J. Gardner, D.~Hug, S.~Xing, and D.~Ye}, {\em General volumes in the
  {O}rlicz-{B}runn-{M}inkowski theory and a related {M}inkowski problem {II}},
  Calc. Var. Partial Differential Equations, 59 (2020), pp.~Paper No. 15, 33.

\bibitem{HLYZ.Adv.224-2010.2485}
{\sc C.~Haberl, E.~Lutwak, D.~Yang, and G.~Zhang}, {\em The even {O}rlicz
  {M}inkowski problem}, Adv. Math., 224 (2010), pp.~2485--2510.

\bibitem{HP.Adv.323-2018.114}
{\sc M.~Henk and H.~Pollehn}, {\em Necessary subspace concentration conditions
  for the even dual {M}inkowski problem}, Adv. Math., 323 (2018), pp.~114--141.

\bibitem{HJ.JFA.277-2019.2209}
{\sc Y.~Huang and Y.~Jiang}, {\em Variational characterization for the planar
  dual {M}inkowski problem}, J. Funct. Anal., 277 (2019), pp.~2209--2236.

\bibitem{HLYZ.Acta.216-2016.325}
{\sc Y.~Huang, E.~Lutwak, D.~Yang, and G.~Zhang}, {\em Geometric measures in
  the dual {B}runn-{M}inkowski theory and their associated {M}inkowski
  problems}, Acta Math., 216 (2016), pp.~325--388.

\bibitem{HLYZ.JDG.110-2018.1}
\leavevmode\vrule height 2pt depth -1.6pt width 23pt, {\em The
  {$L_p$}-{A}leksandrov problem for {$L_p$}-integral curvature}, J.
  Differential Geom., 110 (2018), pp.~1--29.

\bibitem{HXZ.Adv.385-2021.36}
{\sc Y.~Huang, D.~Xi, and Y.~Zhao}, {\em The {M}inkowski problem in {G}aussian
  probability space}, Adv. Math., 385 (2021), pp.~Paper No. 107769, 36.

\bibitem{HZ.Adv.332-2018.57}
{\sc Y.~Huang and Y.~Zhao}, {\em On the {$L_p$} dual {M}inkowski problem}, Adv.
  Math., 332 (2018), pp.~57--84.

\bibitem{JL.Adv.344-2019.262}
{\sc H.~Jian and J.~Lu}, {\em Existence of solutions to the
  {O}rlicz-{M}inkowski problem}, Adv. Math., 344 (2019), pp.~262--288.

\bibitem{JLW.JFA.274-2018.826}
{\sc H.~Jian, J.~Lu, and X.-J. Wang}, {\em A priori estimates and existence of
  solutions to the prescribed centroaffine curvature problem}, J. Funct. Anal.,
  274 (2018), pp.~826--862.

\bibitem{Jia.ANS.10-2010.297}
{\sc M.-Y. Jiang}, {\em Remarks on the 2-dimensional {$L_p$}-{M}inkowski
  problem}, Adv. Nonlinear Stud., 10 (2010), pp.~297--313.

\bibitem{JWW.CVPDE.60-2021.16}
{\sc Y.~Jiang, Z.~Wang, and Y.~Wu}, {\em Multiple solutions of the planar
  {$L_p$} dual {M}inkowski problem}, Calc. Var. Partial Differential Equations,
  60 (2021), pp.~Paper No. 89, 16.

\bibitem{JWW.MA.386-2023.1201}
\leavevmode\vrule height 2pt depth -1.6pt width 23pt, {\em Variational analysis
  of the planar {$L_p$} dual {M}inkowski problem}, Math. Ann., 386 (2023),
  pp.~1201--1235.

\bibitem{JW.JDE.263-2017.3230}
{\sc Y.~Jiang and Y.~Wu}, {\em On the 2-dimensional dual {M}inkowski problem},
  J. Differential Equations, 263 (2017), pp.~3230--3243.

\bibitem{JLL.ANS.21-2021.155}
{\sc H.~Ju, B.~Li, and Y.~Liu}, {\em Deforming a convex hypersurface by
  anisotropic curvature flows}, Adv. Nonlinear Stud., 21 (2021), pp.~155--166.

\bibitem{KM.MAMS.277-2022.1}
{\sc A.~V. Kolesnikov and E.~Milman}, {\em Local {$L^p$}-{B}runn-{M}inkowski
  inequalities for {$p<1$}}, Mem. Amer. Math. Soc., 277 (2022), pp.~1--78.

\bibitem{LLL.IMRNI.-2022.9114}
{\sc Q.-R. Li, J.~Liu, and J.~Lu}, {\em Nonuniqueness of solutions to the
  {$L_p$} dual {M}inkowski problem}, Int. Math. Res. Not. IMRN,  (2022),
  pp.~9114--9150.

\bibitem{LSW.JEMSJ.22-2020.893}
{\sc Q.-R. Li, W.~Sheng, and X.-J. Wang}, {\em Flow by {G}auss curvature to the
  {A}leksandrov and dual {M}inkowski problems}, J. Eur. Math. Soc. (JEMS), 22
  (2020), pp.~893--923.

\bibitem{Liu.CVPDE.61-2022.23}
{\sc J.~Liu}, {\em The {$L_p$}-{G}aussian {M}inkowski problem}, Calc. Var.
  Partial Differential Equations, 61 (2022), pp.~Paper No. 28, 23.

\bibitem{LS.ANS.23-2023.20220040}
{\sc X.~Liu and W.~Sheng}, {\em A curvature flow to the {$L_{p}$}
  {M}inkowski-type problem of {$q$}-capacity}, Adv. Nonlinear Stud., 23 (2023),
  pp.~Paper No. 20220040, 21.

\bibitem{LL}
{\sc Y.~Liu and J.~Lu}, {\em On the number of solutions to the planar dual
  {M}inkowski problem}.
\newblock arXiv:2209.15385.

\bibitem{LL.TAMS.373-2020.5833}
\leavevmode\vrule height 2pt depth -1.6pt width 23pt, {\em A flow method for
  the dual {O}rlicz-{M}inkowski problem}, Trans. Amer. Math. Soc., 373 (2020),
  pp.~5833--5853.

\bibitem{Lu.SCM.61-2018.511}
{\sc J.~Lu}, {\em Nonexistence of maximizers for the functional of the
  centroaffine {M}inkowski problem}, Sci. China Math., 61 (2018), pp.~511--516.

\bibitem{LW.JDE.254-2013.983}
{\sc J.~Lu and X.-J. Wang}, {\em Rotationally symmetric solutions to the
  {$L_p$}-{M}inkowski problem}, J. Differential Equations, 254 (2013),
  pp.~983--1005.

\bibitem{Lut.JDG.38-1993.131}
{\sc E.~Lutwak}, {\em The {B}runn-{M}inkowski-{F}irey theory. {I}. {M}ixed
  volumes and the {M}inkowski problem}, J. Differential Geom., 38 (1993),
  pp.~131--150.

\bibitem{LYZ.Adv.329-2018.85}
{\sc E.~Lutwak, D.~Yang, and G.~Zhang}, {\em {$L_p$} dual curvature measures},
  Adv. Math., 329 (2018), pp.~85--132.

\bibitem{Schneider.2014}
{\sc R.~Schneider}, {\em Convex bodies: the {B}runn-{M}inkowski theory},
  vol.~151 of Encyclopedia of Mathematics and its Applications, Cambridge
  University Press, Cambridge, expanded~ed., 2014.

\bibitem{Sta.Adv.180-2003.290}
{\sc A.~Stancu}, {\em On the number of solutions to the discrete
  two-dimensional {$L_0$}-{M}inkowski problem}, Adv. Math., 180 (2003),
  pp.~290--323.

\bibitem{WZ.CMS-2024}
{\sc H.~Wang and J.~Zhou}, {\em Uniqueness and continuity of the solution to
  {$L_p$} dual {M}inkowski problem}, Commun. Math. Stat.,  (2024).
\newblock https://doi.org/10.1007/s40304-023-00374-2.

\bibitem{XYZZ.ANS.23-2023.20220041}
{\sc D.~Xi, D.~Yang, G.~Zhang, and Y.~Zhao}, {\em The {$L_p$} chord {M}inkowski
  problem}, Adv. Nonlinear Stud., 23 (2023), pp.~Paper No. 20220041, 22.

\bibitem{Zha.CVPDE.56-2017.18}
{\sc Y.~Zhao}, {\em The dual {M}inkowski problem for negative indices}, Calc.
  Var. Partial Differential Equations, 56 (2017), p.~56:18.

\bibitem{Zha.JDG.110-2018.543}
\leavevmode\vrule height 2pt depth -1.6pt width 23pt, {\em Existence of
  solutions to the even dual {M}inkowski problem}, J. Differential Geom., 110
  (2018), pp.~543--572.

\bibitem{ZXY.JGA.28-2018.3829}
{\sc B.~Zhu, S.~Xing, and D.~Ye}, {\em The dual {O}rlicz-{M}inkowski problem},
  J. Geom. Anal., 28 (2018), pp.~3829--3855.

\bibitem{Zhu.Adv.262-2014.909}
{\sc G.~Zhu}, {\em The logarithmic {M}inkowski problem for polytopes}, Adv.
  Math., 262 (2014), pp.~909--931.

\end{thebibliography}

\end{document}